\begin{document}

\newtheorem{theorem}[subsection]{Theorem}
\newtheorem{proposition}[subsection]{Proposition}
\newtheorem{lemma}[subsection]{Lemma}
\newtheorem{corollary}[subsection]{Corollary}
\newtheorem{conjecture}[subsection]{Conjecture}
\newtheorem{prop}[subsection]{Proposition}
\newtheorem{defin}[subsection]{Definition}

\numberwithin{equation}{section}
\newcommand{\mr}{\ensuremath{\mathbb R}}
\newcommand{\mc}{\ensuremath{\mathbb C}}
\newcommand{\dif}{\mathrm{d}}
\newcommand{\intz}{\mathbb{Z}}
\newcommand{\ratq}{\mathbb{Q}}
\newcommand{\natn}{\mathbb{N}}
\newcommand{\comc}{\mathbb{C}}
\newcommand{\rear}{\mathbb{R}}
\newcommand{\prip}{\mathbb{P}}
\newcommand{\uph}{\mathbb{H}}
\newcommand{\fief}{\mathbb{F}}
\newcommand{\majorarc}{\mathfrak{M}}
\newcommand{\minorarc}{\mathfrak{m}}
\newcommand{\sings}{\mathfrak{S}}
\newcommand{\fA}{\ensuremath{\mathfrak A}}
\newcommand{\mn}{\ensuremath{\mathbb N}}
\newcommand{\mq}{\ensuremath{\mathbb Q}}
\newcommand{\half}{\tfrac{1}{2}}
\newcommand{\f}{f\times \chi}
\newcommand{\summ}{\mathop{{\sum}^{\star}}}
\newcommand{\chiq}{\chi \bmod q}
\newcommand{\chidb}{\chi \bmod db}
\newcommand{\chid}{\chi \bmod d}
\newcommand{\sym}{\text{sym}^2}
\newcommand{\hhalf}{\tfrac{1}{2}}
\newcommand{\sumstar}{\sideset{}{^*}\sum}
\newcommand{\sumprime}{\sideset{}{'}\sum}
\newcommand{\sumprimeprime}{\sideset{}{''}\sum}
\newcommand{\sumflat}{\sideset{}{^\flat}\sum}
\newcommand{\shortmod}{\ensuremath{\negthickspace \negthickspace \negthickspace \pmod}}
\newcommand{\V}{V\left(\frac{nm}{q^2}\right)}
\newcommand{\sumi}{\mathop{{\sum}^{\dagger}}}
\newcommand{\mz}{\ensuremath{\mathbb Z}}
\newcommand{\leg}[2]{\left(\frac{#1}{#2}\right)}
\newcommand{\muK}{\mu_{\omega}}
\newcommand{\thalf}{\tfrac12}
\newcommand{\lp}{\left(}
\newcommand{\rp}{\right)}
\newcommand{\Lam}{\Lambda_{[i]}}
\newcommand{\lam}{\lambda}
\newcommand{\af}{\mathfrak{a}}
\newcommand{\sw}{S_{[i]}(X,Y;\Phi,\Psi)}
\newcommand{\lz}{\left(}
\newcommand{\pz}{\right)}
\newcommand{\bfrac}[2]{\lz\frac{#1}{#2}\pz}
\newcommand{\odd}{\mathrm{\ primary}}
\newcommand{\even}{\text{ even}}
\newcommand{\res}{\mathrm{Res}}
\newcommand{\sumn}{\sumstar_{(c,1+i)=1}  w\left( \frac {N(c)}X \right)}
\newcommand{\lab}{\left|}
\newcommand{\rab}{\right|}
\newcommand{\Go}{\Gamma_{o}}
\newcommand{\Ge}{\Gamma_{e}}
\newcommand{\M}{\widehat}

\theoremstyle{plain}
\newtheorem{conj}{Conjecture}
\newtheorem{remark}[subsection]{Remark}

\makeatletter
\def\widebreve{\mathpalette\wide@breve}
\def\wide@breve#1#2{\sbox\z@{$#1#2$}%
     \mathop{\vbox{\m@th\ialign{##\crcr
\kern0.08em\brevefill#1{0.8\wd\z@}\crcr\noalign{\nointerlineskip}%
                    $\hss#1#2\hss$\crcr}}}\limits}
\def\brevefill#1#2{$\m@th\sbox\tw@{$#1($}%
  \hss\resizebox{#2}{\wd\tw@}{\rotatebox[origin=c]{90}{\upshape(}}\hss$}
\makeatletter

\title[Ratios conjecture of cubic $L$-functions of prime moduli]{Ratios conjecture of cubic $L$-functions of prime moduli}

\author[P. Gao]{Peng Gao}
\address{School of Mathematical Sciences, Beihang University, Beijing 100191, China}
\email{penggao@buaa.edu.cn}

\author[L. Zhao]{Liangyi Zhao}
\address{School of Mathematics and Statistics, University of New South Wales, Sydney NSW 2052, Australia}
\email{l.zhao@unsw.edu.au}

\begin{abstract}
We develop $L$-functions ratios conjecture with one shift in the numerator and denominator in certain ranges for the family of cubic Hecke $L$-functions of prime moduli over the Eisenstein field using multiple Dirichlet series under the generalized Riemann hypothesis. As applications, we evaluate asymptotically the first moment of central values as well as the one-level density of the same family of $L$-functions.
\end{abstract}

\maketitle

\noindent {\bf Mathematics Subject Classification (2010)}: 11M06, 11M41  \newline

\noindent {\bf Keywords}: ratios conjecture, first moment, cubic Hecke $L$-functions, one-level density, low-lying zeros

\section{Introduction}\label{sec 1}

  Originated in the work of D. W. Farmer \cite{Farmer93} on the shifted moments of the Riemann zeta function and formulated for general $L$-functions by J. B. Conrey, D. W. Farmer and M. R. Zirnbauer in \cite[Section 5]{CFZ}, the $L$-functions ratios conjecture makes predictions on the asymptotic behaviors of the sum of ratios of products of shifted $L$-functions. This conjecture has many important applications including the studies of moments of $L$-functions and the density conjecture of N. Katz and P. Sarnak \cites{KS1, K&S} on the distribution of zeros near the central point of a family of $L$-functions. \newline

  There are now a few results in the literature developing the ratios conjecture, starting from the work of H. M. Bui, A. Florea and J. P. Keating \cite{BFK21} on quadratic $L$-functions over function fields.  A somewhat more general result on the ratios conjecture for quadratic $L$-functions over function fields was recently obtained by V. Y. Wang \cite{VYWang} using different methods.  The first available result on ratios conjecture over number fields is given by M. \v Cech \cite{Cech1}, who considered the families of both general and primitive quadratic Dirichlet $L$-functions under the assumption of the generalized Riemann hypothesis (GRH). \newline

  A powerful tool used in the work of \v Cech \cite{Cech1} is the method of multiple Dirichlet series, an approach that has been
previous employed mainly to study moments of central values of families of $L$-functions. The same approach was then applied by the authors \cites{G&Zhao14, G&Zhao15, G&Zhao16} to develop ratios
conjecture for various families of $L$-functions.  \newline

  So far, all known results on ratios conjecture concentrate on families of $L$-functions whose conductors span a subset of positive density over the ring of integers of the corresponding number field. Given that families of $L$-functions whose conductors form a sparse set (a set of density zero) in the ring of integers are also important to study (for example, families of $L$-functions of prime conductors), it is therefore also interesting to develop ratios conjecture for these families. The aim of this paper is to do so for one such case. More specifically, we examine the ratios conjecture for cubic Hecke $L$-functions over the Eisenstein field $\mq(\omega)$, where $\omega=\frac {-1+\sqrt{3}i}{2}$ is a primitive cubic root of unity. \newline

  To state the result, we henceforth fix $K=\mq(\omega)$ for the Eisenstein field.  We write $\mathcal{O}_K$ for the ring of integers and  $U_K$ for the group of units in $\mathcal{O}_K$. We shall reserve the symbol $\varpi$ for a prime number in $\mathcal{O}_K$, by which we mean that the ideal $(\varpi)$ generated by $\varpi$ is a prime ideal.  Let $N(n)$ stand for the norm of any $n \in K$ and $\chi$ for a Hecke character of $K$ and we say that $\chi$ is of trivial infinite type if its component at the infinite places of $K$ is trivial. We write $L(s,\chi)$ for the $L$-function associated to $\chi$ and $\zeta_{K}(s)$ for the Dedekind zeta function of $K$ and $\zeta(s)$ for the Riemann zeta function.   We also use the notation $L^{(c)}(s, \chi_m)$ for the Euler product defining $L(s, \chi_m)$ but omitting those primes dividing $c$.  Let $\Lambda_K(n)$ be the von Mangoldt function on $\mathcal{O}_K$ given by
\begin{align*}
    \Lambda_K(n)=\begin{cases}
   \log N(\varpi) \qquad & n=\varpi^k, \text{$\varpi$ prime}, k \geq 1, \\
     0 \qquad & \text{otherwise}.
    \end{cases}
\end{align*}

  We fix $j=3$ throughout the paper and we note that (see \cite[Proposition 8.1.4]{BEW}) every ideal in $\mathcal{O}_K$ co-prime to $j$ has a unique generator congruent to $1$ modulo $3$ which is called primary. Let $\chi_{j, \varpi}$ be a cubic Hecke symbol defined in Section \ref{sec2.4}.  It is shown there that $\chi_{j, \varpi}$ is a Hecke character of trivial infinite type when $\varpi \equiv 1 \pmod {9}$.  We shall detect this congrudence using ray class group
characters. Here we call that for any integral ideal $\mathfrak{m} \in O_K$, the ray class group $h_{\mathfrak{m} }$ is defined to be $I_{\mathfrak{m} }/P_{\mathfrak{m} }$, where $I_{\mathfrak{m} } = \{
\mathcal{A} \in I : (\mathcal{A}, \mathfrak{m} ) = 1 \}$ and $P_{\mathfrak{m} } = \{(a) \in P : a \equiv 1 \bmod \mathfrak{m}  \}$ with $I$ and $P$ denoting the group of
fractional ideals in $K$ and the subgroup of principal ideals, respectively. \newline

   Our main result investigates, under GRH, the ratios conjecture with one shift in the numerator
and denominator for the family of cubic Hecke $L$-functions of prime moduli. As usual, in the rest of the paper, we use $\varepsilon$ to denote a small positive quantity which may not be the same at each occurrence.
\begin{theorem}
\label{Theorem for all characters}
With the notation as above and assuming the truth of GRH, let $X$ be a large real number and $w(t)$ be a non-negative Schwartz function and $\widehat w(s)$ be its Mellin transform.  Set
\begin{align}
\label{Nab}
			E(\alpha,\beta)=\max\left\{\tfrac 12, \ \tfrac 56-\Re(\alpha), \ 1-\Re(\beta),  \ 1-3\Re(\alpha)-2\Re(\beta), \ 1-\tfrac {13}{15}\Re(\alpha)-\tfrac {2}{15}\Re(\beta), \ \tfrac {12}{13}-\tfrac {11}{13}\Re(\alpha) \right\}.
\end{align}
and $\delta(\alpha)=2/11$ for $\Re(\alpha) <0$ and $\delta(\alpha)=0$ otherwise.  Then for $\Re(\alpha) > -1/11$ and $\Re(\beta)>0$ such that $E(\alpha,\beta)<1$,
\begin{align}
\label{Asymptotic for ratios of all characters}
\begin{split}	
\sumstar_{\substack{\varpi}} & \frac {\Lambda_{K}(\varpi) L(\tfrac 12+\alpha, \chi_{j, \varpi})}{L(\tfrac 12+\beta, \chi_{j, \varpi})} w \bfrac {N(\varpi)}X  \\
& \hspace*{2cm} = \frac {\M w(1)X}{\# h_{(9)}}  \frac{1-3^{-1/2-\beta}}{1-3^{-1/2-\alpha}}
\frac {\zeta^{(j)}_K( \tfrac{3}{2}+3\alpha)}{\zeta^{(j)}_K( \tfrac{3}{2}+2\alpha+\beta)} +O\lz(1+|\alpha|)^{\delta(\alpha)+\varepsilon}(1+|\beta|)^{\varepsilon} X^{E(\alpha,\beta)+\varepsilon}\pz,
\end{split}
\end{align}
 where the ``$*$'' on the sum over $\varpi$ means that the sum is restricted to prime elements $\varpi$ of $\mathcal{O}_K $ that are congruent to $1$ modulo $9$.
\end{theorem}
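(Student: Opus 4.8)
The plan is to express the ratio $L(\tfrac12+\alpha,\chi_{j,\varpi})/L(\tfrac12+\beta,\chi_{j,\varpi})$ as a Dirichlet series in two variables (the numerator expanded against $\mu$-type coefficients coming from $1/L$), so that after summing over $\varpi$ one is left with a multiple Dirichlet series whose analytic continuation governs the main term and error. Concretely, write, for $\Re(\alpha),\Re(\beta)$ large,
\begin{align*}
\frac{L(\tfrac12+\alpha,\chi_{j,\varpi})}{L(\tfrac12+\beta,\chi_{j,\varpi})}=\sum_{n}\frac{\chi_{j,\varpi}(n)\,\lambda_{\alpha,\beta}(n)}{N(n)^{1/2}},
\end{align*}
where $\lambda_{\alpha,\beta}$ is the convolution of $N(\cdot)^{-\alpha}$ with a $\mu$-like arithmetic function weighted by $N(\cdot)^{-\beta}$, then detect the condition $\varpi\equiv1\pmod 9$ via characters of the ray class group $h_{(9)}$ and insert the von Mangoldt weight so that the sum over $\varpi$ becomes (essentially) $-\tfrac{1}{\#h_{(9)}}\sum_{\psi\bmod(9)}\sum_{m}\Lambda_K(m)\psi(m)\chi_{j,m}(n)N(m)^{-s}$-type objects, picked out by a Mellin inversion against $w$. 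Here the cubic reciprocity law (the Eisenstein analogue, giving $\chi_{j,m}(n)=\chi_{j,n}(m)$ up to explicit root-of-unity factors for primary arguments) is what lets us swap the roles of $m$ and $n$ and recognize the $m$-sum as (the logarithmic derivative of) a cubic Hecke $L$-function in the variable attached to $n$.

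The key steps, in order. \emph{Step 1:} set up the multiple Dirichlet series $Z(s;\alpha,\beta)$ obtained by summing the above over $\varpi$ with the ray-class characters, and identify its region of absolute convergence. \emph{Step 2:} use cubic reciprocity to rewrite the inner sum over prime moduli as a combination of Hecke $L$-functions $L^{(9)}(\cdot,\chi_{j,n})$ (and their logarithmic derivatives from the von Mangoldt weight), so that $Z$ becomes a Dirichlet series in $n$ against coefficients built from $L$-values; this is where the functional equation of the cubic theta/Hecke $L$-functions and standard cubic Gauss sum estimates enter. \emph{Step 3:} establish the analytic continuation of $Z(s;\alpha,\beta)$ slightly to the left of the line of interest, locating the polar structure — there should be a simple pole (coming from the principal ray class character together with the pole of $\zeta_K$) whose residue produces exactly the main term $\tfrac{\M w(1)X}{\#h_{(9)}}\cdot\tfrac{1-3^{-1/2-\beta}}{1-3^{-1/2-\alpha}}\cdot\zeta_K^{(j)}(\tfrac32+3\alpha)/\zeta_K^{(j)}(\tfrac32+2\alpha+\beta)$. \emph{Step 4:} under GRH, bound $Z$ on vertical lines in the continued region using convexity/subconvexity-type bounds for the relevant $L$-functions, and then move the contour in the Mellin–Perron integral $\tfrac{1}{2\pi i}\int \M w(s)X^s Z(s;\alpha,\beta)\,ds$ past the pole; optimizing the contour position against the various growth exponents yields the error exponent $E(\alpha,\beta)$, and the $(1+|\alpha|)^{\delta(\alpha)}$ factor reflects the extra cost when $\Re(\alpha)<0$ forces us through a region where the numerator $L$-function contributes polynomial growth in $\alpha$.

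\textbf{Main obstacle.} The hard part will be Step 3–4: getting a clean analytic continuation of the double Dirichlet series with good polynomial control in all of $s$, $\alpha$, $\beta$ simultaneously, in a region wide enough that the resulting exponent $E(\alpha,\beta)$ stays below $1$ throughout the stated $(\alpha,\beta)$-range. This requires carefully tracking the cubic Gauss sum cancellation (the main input beyond GRH, since the cubic large sieve is weaker than the quadratic one) and balancing it against the convexity bounds for the Hecke $L$-functions; the somewhat intricate six-term maximum defining $E(\alpha,\beta)$ is precisely the bookkeeping of these competing contributions, and making each term come out with the right constants — especially the terms with denominators $15$ and $13$, which presumably come from interpolating between two different ways of estimating the $n$-sum — is where the real work lies. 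A secondary technical nuisance is handling the prime-power terms $\varpi^k$, $k\ge2$, in $\Lambda_K$ and the ramified/small primes (those dividing $3$), which must be shown to contribute only to the error term.
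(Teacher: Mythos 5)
Your proposal follows essentially the same route as the paper: set up a triple Dirichlet series in $(s,\alpha,\beta)$, expand both $L$-factors, detect $\varpi\equiv 1\pmod 9$ by ray class characters modulo $9$, swap to logarithmic derivatives of Hecke $L$-functions via (implicit) cubic reciprocity, invoke the functional equation in the numerator shift together with the David--G\"ulo\u{g}lu bound on sums of cubic Gauss sums over primes to get a second region of absolute convergence, and shift the Mellin contour through the simple pole at $s=1$ to extract the main term. The only ingredient you do not name is Bochner's tube theorem, which the paper uses to extend holomorphy (and, via Proposition \ref{Extending inequalities}, the GRH-Lindel\"of bounds rather than convexity bounds) to the convex hull $S_3$ of the two regions, producing precisely the exponents with denominators $13$ and $15$ that you correctly guess arise from interpolation.
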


   Our result above agrees with the prediction from the ratios conjecture on the left-hand side of \eqref{Asymptotic for ratios of all characters}, which can be derived following the recipe given in \cite[Section 5]{CFZ} except that (see also \cite{CS}) the ratios conjecture asserts that \eqref{Asymptotic for ratios of all characters} holds uniformly for $|\Re(\alpha)|< 1/4$, $(\log X)^{-1} \ll \Re(\beta) < 1/4$ and $\Im(\alpha), \Im(\beta) \ll X^{1-\varepsilon}$ with an error term $O(X^{1/2+\varepsilon})$. On the other hand, Theorem \ref{Theorem for all characters} has the advantage that there is no constraint on imaginary parts of $\alpha$ and $\beta$.   In fact, under the conditions of Theorem~\ref{Theorem for all characters}, \eqref{Asymptotic for ratios of all characters} gives an asymptotic formula as soon as $\Re (\beta) > -3\Re (\alpha)/2 + \varepsilon$ holds. \newline

   Upon taking the limit $\beta \rightarrow \infty$ on both sides of \eqref{Asymptotic for ratios of all characters} and observing that in this case one may drop the factor involving with $\beta$ in the error term, we readily obtain the following result concerning the first moment of cubic Hecke $L$-functions.
\begin{theorem}
\label{Thmfirstmoment}
		With the notation as above and assuming the truth of GRH, we have for $\Re(\alpha)>-1/11$,
\begin{align}
\label{Asymptotic for first moment}
\begin{split}			
& \sumstar_{\substack{\varpi}}\Lambda_{K}(\varpi)L(\tfrac 12+\alpha, \chi_{j, \varpi}) w \bfrac {N(\varpi)}X  =  \frac {\M w(1)X}{\# h_{(9)}}
 \frac{\zeta^{(j)}_K(\tfrac{3}{2}+3\alpha)}{1-3^{-1/2-\alpha}} +O\lz(1+|\alpha|)^{\delta(\alpha)+\varepsilon}X^{E(\alpha)+\varepsilon}\pz,
\end{split}
\end{align}
  where
\begin{align}
\label{Ealpha}
\begin{split}
      &  E(\alpha):=\lim_{\beta \rightarrow \infty}E(\alpha,\beta)=\max\left\{\tfrac 12, \ \tfrac 56-\Re(\alpha), \ \tfrac {12}{13}-\tfrac {11}{13}\Re(\alpha)  \right\}.
\end{split}
\end{align} 	
\end{theorem}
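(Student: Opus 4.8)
The plan is to obtain Theorem~\ref{Thmfirstmoment} from Theorem~\ref{Theorem for all characters} by letting the denominator shift $\beta$ tend to infinity, as anticipated in the discussion preceding the statement. Fix $\alpha$ with $\Re(\alpha)>-1/11$, and for $X$ large apply \eqref{Asymptotic for ratios of all characters} with $\beta$ chosen to grow slowly with $X$, for concreteness $\beta=(\log X)^2$. Then $\Re(\beta)=(\log X)^2>0$, while the three $\beta$-dependent entries $1-\Re(\beta)$, $1-3\Re(\alpha)-2\Re(\beta)$ and $1-\tfrac{13}{15}\Re(\alpha)-\tfrac{2}{15}\Re(\beta)$ in the maximum \eqref{Nab} tend to $-\infty$; hence $E(\alpha,(\log X)^2)=E(\alpha)$ for all sufficiently large $X$, where $E(\alpha)$ is as in \eqref{Ealpha}. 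A direct check shows that $\Re(\alpha)>-1/11$ forces $E(\alpha)<1$, so the hypotheses of Theorem~\ref{Theorem for all characters} are met; and since $(1+|\beta|)^{\varepsilon}=(1+(\log X)^2)^{\varepsilon}\ll X^{\varepsilon}$, the error term in \eqref{Asymptotic for ratios of all characters} collapses to $O\bigl((1+|\alpha|)^{\delta(\alpha)+\varepsilon}X^{E(\alpha)+\varepsilon}\bigr)$, which is exactly the one claimed in \eqref{Asymptotic for first moment}.

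Next I would pass to the limit in the explicit main term on the right of \eqref{Asymptotic for ratios of all characters}. Because $\Re(\tfrac32+2\alpha+\beta)\to\infty$, the defining Dirichlet series gives $\zeta^{(j)}_K(\tfrac32+2\alpha+\beta)=1+O(4^{-(\log X)^2})$, the smallest norm of an integral ideal coprime to $j=3$ and exceeding $1$ being $N((2))=4$; likewise $3^{-1/2-\beta}=3^{-1/2}X^{-(\log 3)\log X}$ is super-polynomially small. Since $\Re(\tfrac32+3\alpha)>\tfrac32-\tfrac3{11}>1$, the factor $\zeta^{(j)}_K(\tfrac32+3\alpha)$ is finite and nonzero, and $1-3^{-1/2-\alpha}\neq 0$ as $\Re(\alpha)>-1/11>-1/2$; consequently the main term equals
\[
\frac{\M w(1)X}{\#h_{(9)}}\cdot\frac{\zeta^{(j)}_K(\tfrac32+3\alpha)}{1-3^{-1/2-\alpha}}+O(X^{-1}),
\]
and the displayed error is absorbed into $O(X^{E(\alpha)+\varepsilon})$ since $E(\alpha)\geq\tfrac12$. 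This recovers precisely the right-hand side of \eqref{Asymptotic for first moment}.

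It then remains to replace the ratio $L(\tfrac12+\alpha,\chi_{j,\varpi})/L(\tfrac12+\beta,\chi_{j,\varpi})$ on the left of \eqref{Asymptotic for ratios of all characters} by $L(\tfrac12+\alpha,\chi_{j,\varpi})$; for this I would bound
\[
\sumstar_{\varpi}\Lambda_{K}(\varpi)\,\bigl|L(\tfrac12+\alpha,\chi_{j,\varpi})\bigr|\cdot\bigl|1-L(\tfrac12+\beta,\chi_{j,\varpi})^{-1}\bigr|\cdot\Bigl|w\bfrac{N(\varpi)}X\Bigr|.
\]
From $L(s,\chi)^{-1}=\sum_{\mathfrak{a}}\mu_K(\mathfrak{a})\chi(\mathfrak{a})N(\mathfrak{a})^{-s}$, which converges absolutely for $\Re(s)>1$, one gets $|1-L(\tfrac12+\beta,\chi_{j,\varpi})^{-1}|\ll 3^{-c\Re(\beta)}$ for an absolute $c>0$, uniformly in $\varpi$, which for $\beta=(\log X)^2$ is super-polynomially small. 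Under GRH one has the Lindel\"of-type bound $|L(\tfrac12+\alpha,\chi_{j,\varpi})|\ll N(\varpi)^{\max\{0,-\Re(\alpha)\}+\varepsilon}$, invoking the functional equation when $\Re(\alpha)<0$; together with the rapid decay of the Schwartz weight $w$ and a crude upper bound on the number of prime ideals of norm at most $X$, the displayed sum is $\ll X^{1+\max\{0,-\Re(\alpha)\}+\varepsilon}\,3^{-c(\log X)^2}\ll X^{E(\alpha)+\varepsilon}$. Assembling these three steps — substituting $\beta=(\log X)^2$ into \eqref{Asymptotic for ratios of all characters}, replacing the main term by its limit, and discarding the $\beta$-denominator on the left — yields \eqref{Asymptotic for first moment} for $X$ large, the range of bounded $X$ being trivial after adjusting the implied constant. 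The only step requiring genuine input is the uniform-in-$\varpi$ bound for $|L(\tfrac12+\alpha,\chi_{j,\varpi})|$ used to drop the denominator; everything else is bookkeeping on the limiting behaviour of the explicit main term and of the exponent $E(\alpha,\beta)$ as $\beta\to\infty$.
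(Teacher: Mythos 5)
Your proof is correct and takes the same route as the paper, which obtains Theorem~\ref{Thmfirstmoment} from Theorem~\ref{Theorem for all characters} simply by letting $\beta\to\infty$. You merely make that one-line reduction rigorous by choosing $\beta=(\log X)^2$ and checking the three pieces of bookkeeping — stabilization of $E(\alpha,\beta)$ at $E(\alpha)$, super-polynomial collapse of the $\beta$-dependent factors $1-3^{-1/2-\beta}$ and $\zeta^{(j)}_K(\tfrac32+2\alpha+\beta)$ in the main term, and a GRH/functional-equation bound on $|L(\tfrac12+\alpha,\chi_{j,\varpi})|$ to discard the $L(\tfrac12+\beta,\chi_{j,\varpi})^{-1}$ on the left.
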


 We remark here that the case $\alpha=0$ in \eqref{Asymptotic for first moment} establishes an asymptotic formula for the first moment of central values of the family of cubic Hecke $L$-functions of prime  moduli with an error term of size $O(X^{12/13+\varepsilon})$, which improves the cubic case given in \cite[Theorem 1.2]{G&Zhao2022-4}. \newline

  Similarly, we set $\alpha \rightarrow \infty$ on both sides of \eqref{Asymptotic for ratios of all characters} and drop the factor involving with $\alpha$ in the error term to obtain the following result concerning the negative first moment of cubic Hecke $L$-functions.
\begin{theorem}
\label{Thmnegfirstmoment}
		With the notation as above and assuming the truth of GRH, we have for $\Re(\beta)>0$,
\begin{align*}
\begin{split}			
& \sumstar_{\substack{\varpi}}\frac {\Lambda_{K}(\varpi)}{L(\tfrac 12+\beta, \chi_{j, \varpi})} w \bfrac {N(\varpi)}X  =  \frac {\M w(1)X}{\# h_{(9)}}(1-3^{-1/2-\beta})
 +O\lz (1+|\beta|)^{\varepsilon} X^{\max\left\{1/2, 1-\Re(\beta) \right\}+\varepsilon}\pz.
\end{split}
\end{align*}
\end{theorem}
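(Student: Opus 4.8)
The plan is to derive Theorem~\ref{Thmnegfirstmoment} from Theorem~\ref{Theorem for all characters} by letting the numerator shift $\alpha$ grow with $X$ at a controlled rate.  One cannot literally send $\alpha \to \infty$ in \eqref{Asymptotic for ratios of all characters}, because the factor $(1+|\alpha|)^{\delta(\alpha)+\varepsilon}$ in the error term would then blow up; instead I would fix a real parameter $A = A(X)$, apply \eqref{Asymptotic for ratios of all characters} with $\alpha = A$ and $\beta$ as given, and then show that the left side, the main term, and the error term all degenerate as predicted once $A$ is a large constant times $\log X$.  For such $A$ (in particular $A$ above a fixed absolute threshold, which is automatic for $X$ large) and $\Re(\beta) > 0$, one checks directly from \eqref{Nab} that every term carrying $\Re(\alpha)$ with a negative coefficient is dominated, so that $E(A,\beta) = \max\{\tfrac12,\, 1-\Re(\beta)\} < 1$ and $\delta(A) = 0$; hence Theorem~\ref{Theorem for all characters} applies with this choice of $\alpha$.

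Two elementary inputs are needed.  First, by absolute convergence of the Euler product (valid since $\tfrac12+A > 1$ for $X$ large), $L(\tfrac12 + A, \chi_{j,\varpi}) = 1 + O(c^{-A})$ for some absolute constant $c > 1$, uniformly over primary primes $\varpi$.  Second, under GRH one has the standard bound $|L(\tfrac12+\beta,\chi_{j,\varpi})|^{-1} \ll (N(\varpi)(1+|\beta|))^{\varepsilon}$, uniformly over such $\varpi$ and over $\beta$ with $\Re(\beta) \ge 0$.  Combining these with $\Lambda_K(\varpi) \ll \log N(\varpi)$ and the rapid decay of the Schwartz weight, which gives $\sumstar_{\varpi}\Lambda_K(\varpi)N(\varpi)^{\varepsilon}\,|w(N(\varpi)/X)| \ll X^{1+\varepsilon}$, I would obtain
\begin{align*}
  \sumstar_{\varpi} \frac{\Lambda_K(\varpi)\bigl(L(\tfrac12+A,\chi_{j,\varpi}) - 1\bigr)}{L(\tfrac12+\beta,\chi_{j,\varpi})}\, w\bfrac{N(\varpi)}{X} \ll c^{-A}\,(1+|\beta|)^{\varepsilon}\, X^{1+\varepsilon},
\end{align*}
so the negative first moment equals the left side of \eqref{Asymptotic for ratios of all characters} with $\alpha = A$ up to an error of this size.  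On the main-term side, as $A$ grows one has $3^{-1/2-A} \to 0$ and $\zeta^{(j)}_K(\tfrac32+3A),\ \zeta^{(j)}_K(\tfrac32+2A+\beta) \to 1$, each with an error $O(c^{-A})$, so the main term of \eqref{Asymptotic for ratios of all characters} equals $\tfrac{\M w(1)X}{\# h_{(9)}}(1-3^{-1/2-\beta}) + O(c^{-A}X)$, using $|1-3^{-1/2-\beta}| \ll 1$ for $\Re(\beta) > 0$.

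Assembling these pieces gives
\begin{align*}
  \sumstar_{\varpi} \frac{\Lambda_K(\varpi)}{L(\tfrac12+\beta,\chi_{j,\varpi})}\, w\bfrac{N(\varpi)}{X} = \frac{\M w(1)X}{\# h_{(9)}}(1-3^{-1/2-\beta}) + O\lz c^{-A}(1+|\beta|)^{\varepsilon}X^{1+\varepsilon}\pz + O\lz (1+|A|)^{\varepsilon}(1+|\beta|)^{\varepsilon}X^{\max\{1/2,\,1-\Re(\beta)\}+\varepsilon}\pz.
\end{align*}
Choosing $A$ to be a sufficiently large constant multiple of $\log X$ makes $c^{-A}X^{1+\varepsilon} \ll X^{1/2}$ and $(1+|A|)^{\varepsilon} \ll X^{\varepsilon}$, so that, since $\max\{\tfrac12,\,1-\Re(\beta)\} \ge \tfrac12$, the two error terms combine into the claimed $O\bigl((1+|\beta|)^{\varepsilon}X^{\max\{1/2,1-\Re(\beta)\}+\varepsilon}\bigr)$.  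I do not expect any serious obstacle here: this is precisely the limiting regime $\Re(\alpha)\to\infty$ flagged in the remark following Theorem~\ref{Theorem for all characters}, and the only point needing care is the balancing just described — $A$ must be large enough that the replacement $L(\tfrac12+A,\chi_{j,\varpi}) \approx 1$ and the convergence of the zeta-quotient cost only $c^{-A}X^{1+\varepsilon}$ (exponentially small in $A$), yet small enough, of order $\log X$, that the unavoidable $(1+|A|)^{\varepsilon}$ loss stays within $X^{\varepsilon}$.  The uniform GRH bound on $|L(\tfrac12+\beta,\chi_{j,\varpi})|^{-1}$ is the ingredient that renders the replacement-error estimate valid.
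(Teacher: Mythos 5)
Your proposal is correct and follows the same route the paper indicates: deduce the negative first moment from Theorem~\ref{Theorem for all characters} by letting $\Re(\alpha)\to\infty$. The paper disposes of this in one sentence (``set $\alpha\to\infty$ \ldots and drop the factor involving $\alpha$ in the error term''), and you have correctly identified that this is not quite automatic because the $(1+|\alpha|)^{\varepsilon}$ factor in the $O$-term grows without bound; your balancing argument with $\alpha = A \asymp \log X$, combined with the uniform bounds $L(\tfrac12+A,\chi_{j,\varpi}) = 1 + O(c^{-A})$ and $L(\tfrac12+\beta,\chi_{j,\varpi})^{-1}\ll (N(\varpi)(1+|\beta|))^{\varepsilon}$ from GRH, is exactly what is needed to make the limiting step rigorous, and the pieces fit together as you claim (with the ``absolute threshold'' on $A$ for $E(A,\beta)=\max\{1/2,1-\Re(\beta)\}$ being roughly $0.58$, independent of $\beta$).
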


Of course, Theorem~\ref{Thmnegfirstmoment} offers an asymptotic formula whenever $\Re (\beta) > \varepsilon$.  In section \ref{sec: logder}, we shall also differentiate with respect to $\alpha$ in \eqref{Asymptotic for ratios of all characters} and then set $\alpha=\beta=r$ to obtain an asymptotic formula for the smoothed first moment of $L'(\frac{1}{2}+r,\chi_{j, \varpi})/L(\frac{1}{2}+r,\chi_{j, \varpi})$ averaged over $\varpi \equiv 1 \pmod 9$.
\begin{theorem}
\label{Theorem for log derivatives}
	With the notation as above and assuming the truth of GRH, we have for $0<\varepsilon< \Re(r)<1/2$,
\begin{align}
\label{Sum of L'/L with removed 2-factors}
\begin{split}
		& \sumstar_{\substack{\varpi}}\frac {\Lambda_{K}(\varpi)L'(\tfrac 12+r, \chi_{j, \varpi})}{L(\tfrac 12+r, \chi_{j, \varpi})}w \bfrac {N(\varpi)}X
			= \frac {\M w(1)X }{\# h_{(9)}}\lz\frac{(\zeta^{(j)}_K(\frac 32+3r))'}{\zeta^{(j)}_K(\frac 32+3r)}-\frac{\log 3 }{3^{1/2+r}-1}\pz+O((1+|r|)^{\varepsilon}X^{1-\Re(r)+\varepsilon}).
\end{split}
\end{align}
\end{theorem}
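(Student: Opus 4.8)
The plan is to obtain \eqref{Sum of L'/L with removed 2-factors} directly from Theorem~\ref{Theorem for all characters} by differentiating the ratios formula \eqref{Asymptotic for ratios of all characters} in $\alpha$ and then setting $\alpha=\beta=r$, using that
\[
\frac{\partial}{\partial\alpha}\,\frac{L(\tfrac12+\alpha,\chi_{j,\varpi})}{L(\tfrac12+\beta,\chi_{j,\varpi})}\bigg|_{\alpha=\beta=r}
=\frac{L'(\tfrac12+r,\chi_{j,\varpi})}{L(\tfrac12+r,\chi_{j,\varpi})}.
\]
First I would fix $\beta=r$ with $0<\varepsilon<\Re(r)<1/2$ and regard both sides of \eqref{Asymptotic for ratios of all characters} as functions of $\alpha$ on the disc $|\alpha-r|\le\rho$, for a small $\rho>0$ to be chosen. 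On this disc one has $\Re(\alpha)>0$, so $\delta(\alpha)=0$; moreover, a term-by-term inspection of \eqref{Nab}, using $\Re(r)<\tfrac12$, shows that $E(\alpha,r)<1$ throughout the disc and that $E(\alpha,r)\le 1-\Re(r)+O(\rho)$ on the circle $|\alpha-r|=\rho$ once $\rho$ is sufficiently small. Hence Theorem~\ref{Theorem for all characters} applies at every point of this disc. Holomorphy is not an issue: each $\chi_{j,\varpi}$ is a primitive nonprincipal cubic Hecke character, so $L(\cdot,\chi_{j,\varpi})$ is entire, and under GRH $L(\tfrac12+r,\chi_{j,\varpi})\ne0$ because $\Re(r)>0$, so the left side of \eqref{Asymptotic for ratios of all characters} is holomorphic in $\alpha$ on the disc (the $\varpi$-sum converging locally uniformly since $w$ is Schwartz), and the same holds for the main term, whose $\zeta^{(j)}_K$-factors neither vanish nor have poles there.

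Next I would isolate the error. Let $F(\alpha)$ denote the difference between the left side of \eqref{Asymptotic for ratios of all characters} with $\beta=r$ and its main term; then $F$ is holomorphic on the disc and, by Theorem~\ref{Theorem for all characters}, $|F(\alpha)|\ll(1+|r|)^{\varepsilon}X^{1-\Re(r)+O(\rho)+\varepsilon}$ on $|\alpha-r|=\rho$. Cauchy's integral formula for the derivative,
\[
F'(r)=\frac{1}{2\pi i}\oint_{|\alpha-r|=\rho}\frac{F(\alpha)}{(\alpha-r)^2}\,\dif\alpha,
\]
yields $|F'(r)|\le\rho^{-1}\max_{|\alpha-r|=\rho}|F(\alpha)|$, and the choice $\rho=\varepsilon/\log X$ makes $X^{O(\rho)}\ll1$ and $\rho^{-1}\ll X^{\varepsilon}$, so that $|F'(r)|\ll(1+|r|)^{\varepsilon}X^{1-\Re(r)+\varepsilon}$ after renaming $\varepsilon$; this is the claimed error term.

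Finally I would differentiate the main term. Writing it as $\tfrac{\M w(1)X}{\#h_{(9)}}\,g(\alpha,\beta)$ with
\[
g(\alpha,\beta)=\frac{1-3^{-1/2-\beta}}{1-3^{-1/2-\alpha}}\cdot\frac{\zeta^{(j)}_K(\tfrac32+3\alpha)}{\zeta^{(j)}_K(\tfrac32+2\alpha+\beta)},
\]
logarithmic differentiation in $\alpha$ gives
\[
\frac{\partial}{\partial\alpha}\log g(\alpha,\beta)=-\frac{3^{-1/2-\alpha}\log3}{1-3^{-1/2-\alpha}}+3\,\frac{(\zeta^{(j)}_K)'(\tfrac32+3\alpha)}{\zeta^{(j)}_K(\tfrac32+3\alpha)}-2\,\frac{(\zeta^{(j)}_K)'(\tfrac32+2\alpha+\beta)}{\zeta^{(j)}_K(\tfrac32+2\alpha+\beta)}.
\]
Since $g(r,r)=1$, setting $\alpha=\beta=r$ and multiplying numerator and denominator of the first term by $3^{1/2+r}$ produces
\[
\frac{\partial}{\partial\alpha}\Big(\tfrac{\M w(1)X}{\#h_{(9)}}\,g(\alpha,\beta)\Big)\bigg|_{\alpha=\beta=r}
=\frac{\M w(1)X}{\#h_{(9)}}\left(\frac{(\zeta^{(j)}_K(\tfrac32+3r))'}{\zeta^{(j)}_K(\tfrac32+3r)}-\frac{\log3}{3^{1/2+r}-1}\right),
\]
which is exactly the main term of \eqref{Sum of L'/L with removed 2-factors}. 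Combining this with the error bound above finishes the proof. The only delicate point is balancing the factor $\rho^{-1}$ from the Cauchy estimate against the loss $X^{O(\rho)}$ caused by the $\alpha$-variation of $E(\alpha,r)$, which is handled by the choice $\rho\asymp1/\log X$; there is no essential obstacle.
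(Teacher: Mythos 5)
Your proposal is correct and follows essentially the same route as the paper: differentiate the ratios asymptotic \eqref{Asymptotic for ratios of all characters} in $\alpha$, compute the main-term derivative by logarithmic differentiation, and control the derivative of the error term via Cauchy's integral formula on a small circle around $\alpha=r$. The only cosmetic difference is the choice of radius (the paper takes $\rho\in(\varepsilon/2,\varepsilon)$ fixed, you take $\rho\asymp 1/\log X$), but both produce factors absorbable into $X^{\varepsilon}$, so the arguments are interchangeable.
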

	
 Now, Theorem \ref{Theorem for log derivatives} allows one to compute the one-level density of low-lying zeros of the corresponding families of cubic Hecke $L$-functions, following the approach in the proof of \cite[Corollary 1.5]{Cech1} using \cite[Theorem 1.4]{Cech1} for the case of quadratic Dirichlet $L$-functions. For this, let $h(x)$ be an even Schwartz function such that its Fourier transform is supported in the interval $[-a,a]$ for some $a>0$.
We define the one-level density of the family of $L$-functions of interest in this paper by
\begin{align*}
\begin{split}
		D(X;h)=\frac{1}{F(X)}\sumstar_{\substack{ \varpi \odd}}\Lambda_K(\varpi)w \bfrac {N(\varpi)}X\sum_{\gamma_{\varpi, n}}h\bfrac{\gamma_{\varpi, n}\log X}{2\pi},
\end{split}
\end{align*}
where $\gamma_{\varpi, n}$ runs over the imaginary parts of the non-trivial zeros of $L(s,\chi_{j, \varpi})$ and
\begin{align}
\label{Size of family}
\begin{split}
			F(X)&=\sumstar_{\substack{\varpi \odd}}\Lambda_K(\varpi) w \bfrac {N(\varpi)}X.
\end{split}
\end{align}
	
   With the aid of Theorem \ref{Theorem for log derivatives}, our next result computes $D(X;h)$ asymptotically.
\begin{theorem}
\label{Theorem one-level density}
	With the notation as above and assuming the truth of GRH, for any function $w(t)$ that is non-negative and compactly supported on the set of positive real numbers, we have
\begin{align}
\label{Onelevel}
\begin{split}
		D(X;h)=& \frac {2\widehat h(1)}{F(X)\log X} \sumstar_{\substack{\varpi \odd}}\Lambda_K(\varpi) w \bfrac {N(\varpi)}X\log N(\varpi) \\
& \hspace*{0.5cm} +\frac{1}{F(X)}\frac {2 \M w(1) }{\# h_{(9)}} \cdot \frac{X}{\log X}\int\limits_{-\infty}^{\infty}h(u)\lz\frac{(\zeta^{(j)}_K( \tfrac{3}{2} + \frac {6\pi iu}{\log X}))'}{\zeta^{(j)}_K( \tfrac{3}{2} +\frac {6\pi iu}{\log X})}-\frac{\log 3 }{3^{1/2+2\pi iu/\log X}-1}\pz \dif u \\
& \hspace*{0.5cm} +\frac 1{\log X}\int\limits^{\infty}_{-\infty}h\lz u \pz \lz \frac {\Gamma'}{\Gamma} \left( \frac 12+\frac {2 \pi iu}{\log X} \right) +\frac {\Gamma'}{\Gamma} \left( \frac 12-\frac {2 \pi iu}{\log X} \right) \pz \dif u+\frac {2\widehat h(1)}{\log X} \log \frac {|D_K|}{4\pi^2} +O(X^{(1+a)/2+\varepsilon}).
\end{split}
\end{align}
  In particular, if $a<1$, we have
\begin{align}
\label{Onelevelasym}
\begin{split}
		D(X;h)= \int\limits^{\infty}_{-\infty}h(x) \dif x & +\frac{2\widehat h(1)}{\log X} \lz \frac{2(\zeta^{(j)}_K(\tfrac{3}{2}))'}{\zeta^{(j)}_K(\tfrac{3}{2})}-\frac{2 \log 3 }{3^{\frac 12}-1} + \frac {1}{\widehat w(1)} \left( \int\limits^{\infty}_0 w(u)\log u \ \dif u \right)+2\frac {\Gamma'}{\Gamma}\left( \frac 12 \right) +
\log \frac {|D_K|}{4\pi^2} \pz   \\
		& +O \left( \frac 1{(\log X)^2} \right).
\end{split}
\end{align}
\end{theorem}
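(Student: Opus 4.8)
The plan is to convert the inner sum over zeros into a contour integral of a logarithmic derivative, average this over $\varpi$ by invoking Theorem~\ref{Theorem for log derivatives}, and deal separately with the contribution of the conductor and the archimedean factor of $L(s,\chi_{j,\varpi})$. For $\varpi\equiv 1\pmod 9$ the character $\chi_{j,\varpi}$ is primitive of trivial infinite type with conductor $(\varpi)$, so the completed $L$-function $\Lambda(s,\chi_{j,\varpi})=(|D_K|N(\varpi))^{s/2}\,\Gamma_{\mathbb C}(s)\,L(s,\chi_{j,\varpi})$ is entire and its functional equation relates it to $\Lambda(1-s,\overline{\chi_{j,\varpi}})$. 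Since $\widehat h$ has compact support, $G(s):=h\bigl((s-\tfrac12)(\log X)/(2\pi i)\bigr)$ is entire, invariant under $s\mapsto 1-s$, decays faster than any power of $|\Im s|$, and grows at most like $X^{a|\Re(s)-1/2|}$ on vertical lines. The argument principle applied to $\Lambda(\,\cdot\,,\chi_{j,\varpi})$, together with the functional equation (used to fold the line $\Re(s)=1-c$ onto $\Re(s)=c$ for $c>1$) and $\Gamma_{\mathbb C}(s)=2(2\pi)^{-s}\Gamma(s)$, gives
\begin{align*}
\sum_{\gamma_{\varpi,n}}h\Bigl(\tfrac{\gamma_{\varpi,n}\log X}{2\pi}\Bigr)
&=\frac{1}{2\pi i}\int_{(c)}\Bigl(\tfrac{L'}{L}(s,\chi_{j,\varpi})+\tfrac{L'}{L}(s,\overline{\chi_{j,\varpi}})\Bigr)G(s)\,ds\\
&\quad+\frac{1}{2\pi i}\int_{(c)}\Bigl(\log\tfrac{|D_K|N(\varpi)}{4\pi^2}+2\tfrac{\Gamma'}{\Gamma}(s)\Bigr)G(s)\,ds.
\end{align*}

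For the second integral I would shift the contour to $\Re(s)=\tfrac12$ (no pole is crossed), set $s=\tfrac12+it$, substitute $u=t(\log X)/(2\pi)$ and use that $h$ is even; after weighting by $\Lambda_K(\varpi)w(N(\varpi)/X)$, summing over $\varpi\equiv 1\pmod 9$ and dividing by $F(X)$ (noting $F(X)^{-1}\sumstar_{\varpi}\Lambda_K(\varpi)w(N(\varpi)/X)=1$), this reproduces exactly the first term of \eqref{Onelevel} (the one proportional to $\sumstar_{\varpi}\Lambda_K(\varpi)w(N(\varpi)/X)\log N(\varpi)$), the $\log(|D_K|/4\pi^2)$ term, and the $\Gamma'/\Gamma$ integral. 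The first integral is the crux. Shift the contour to $\Re(s)=\tfrac12+\delta$ with $0<\delta<\tfrac12$; under GRH, $L'/L(s,\chi_{j,\varpi})$ is holomorphic there and, with a bound uniform in $\varpi$ that is polynomial in $|\Im s|$, one may interchange the integration with $\sumstar_{\varpi}\Lambda_K(\varpi)w(N(\varpi)/X)$ and apply Theorem~\ref{Theorem for log derivatives} with $r=s-\tfrac12$ to the $\chi_{j,\varpi}$-term and, via $\overline{L(\bar s,\chi)}=L(s,\bar\chi)$, its complex conjugate with $r=\overline{s-\tfrac12}$ to the $\overline{\chi_{j,\varpi}}$-term. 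Both give the same main term $\tfrac{\widehat w(1)X}{\#h_{(9)}}\bigl(\tfrac{(\zeta^{(j)}_K)'}{\zeta^{(j)}_K}(\tfrac32+3r)-\tfrac{\log 3}{3^{1/2+r}-1}\bigr)$, which accounts for the factor $2$ in the second term of \eqref{Onelevel}, while the error contributes $\ll X^{1-\delta+\varepsilon}$ per unit length and, integrated against $|G(s)|\ll_N(1+|\Im s|\log X)^{-N}X^{a\delta}$, totals $\ll X^{1-\delta+a\delta+\varepsilon}$. Since the only pole of $\zeta^{(j)}_K$ sits at $\tfrac32+3r=1$, i.e.\ $s=\tfrac13<\tfrac12$, the main-term integral may be moved back to $\Re(s)=\tfrac12$, and the substitution $u=t(\log X)/(2\pi)$ turns it into the $u$-integral in the second term of \eqref{Onelevel}.

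Assembling these contributions yields \eqref{Onelevel}, with the error $O(X^{(1+a)/2+\varepsilon})$ obtained by letting $\delta\to\tfrac12^-$ so that $1-\delta+a\delta\to\tfrac{1+a}2$. To deduce \eqref{Onelevelasym} one assumes $a<1$, so this error is $o(F(X))=o(X)$ and hence negligible after dividing by $F(X)$, and then simplifies the surviving terms as $X\to\infty$: the prime ideal theorem for the ray class group $h_{(9)}$ (with power-saving error under GRH) gives $\sumstar_{\varpi}\Lambda_K(\varpi)w(N(\varpi)/X)\log N(\varpi)=F(X)\log X+\tfrac{X}{\#h_{(9)}}\int_0^\infty w(u)\log u\,du+O(\cdots)$ and $F(X)=\tfrac{\widehat w(1)X}{\#h_{(9)}}+O(\cdots)$, while the $\zeta^{(j)}_K$-, $\Gamma'/\Gamma$- and constant $u$-integrals are Taylor expanded around $u=0$; the terms linear in $2\pi iu/(\log X)$ vanish by parity ($h$ even), so the first correction is $O((\log X)^{-2})$, giving \eqref{Onelevelasym}.

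\textbf{Main obstacle.} The delicate step is the $L'/L$ contour integral: one must move the contour safely across $\Re(s)=\tfrac12$ into the strip where Theorem~\ref{Theorem for log derivatives} is valid, which needs GRH both to guarantee holomorphy and to supply a bound on $L'/L(s,\chi_{j,\varpi})$ uniform in $\varpi$ so that the interchange of $\sumstar_\varpi$ with $\int$ and the vanishing of the horizontal truncations are legitimate; one must then move the main term back to $\Re(s)=\tfrac12$ past no poles; and one must balance $\delta$ against the error $X^{1-\Re(r)+\varepsilon}$ of Theorem~\ref{Theorem for log derivatives} and the growth $X^{a\delta}$ of $G$ on the shifted line — it is precisely here that the restriction $a<1$ is forced. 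A secondary point is keeping enough precision on the conductor and $\Gamma$-factor of $\chi_{j,\varpi}$ to recover the constants $\log(|D_K|/4\pi^2)$ and $\Gamma'/\Gamma(\tfrac12)$ exactly.
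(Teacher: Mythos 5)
Your proposal follows the paper's own argument essentially line for line: the explicit formula via the residue theorem applied to $\Lambda(\cdot,\chi_{j,\varpi})$ with the functional equation used to fold $\Re(s)=1-c$ onto $\Re(s)=c$, the split into the $L'/L$-contribution and the conductor/$\Gamma$-contribution, the evaluation of the former by moving to $\Re(s)=\tfrac12+r$ with $r$ close to $\tfrac12$ and invoking Theorem~\ref{Theorem for log derivatives} together with its conjugate version for $\overline{\chi}_{j,\varpi}$, then shifting the resulting main term back to $r=0$, and finally the prime ideal theorem for the ray class group $h_{(9)}$ plus a parity-based Taylor expansion to pass from \eqref{Onelevel} to \eqref{Onelevelasym}. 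The error optimization $1-\delta+a\delta\to(1+a)/2$ matches the paper's choice $r=\tfrac12-\varepsilon$, so the proposal is correct and takes the same route.
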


   We refer the readers to the work of C. David  and A. M. G\"{u}lo\u{g}lu \cites{DG22} on one-level density results concerning families of cubic Hecke $L$-functions whose conductors form sets of positive density in $\mathcal O_K$. In fact, besides using the approach of multiple Dirichlet series, the proof of
Theorem \ref{Theorem for all characters} also relies heavily on \cite[Theorem 4.4]{DG22}, which estimates sums of cubic Gauss sums over
primes. \newline

  Our considerations above also extend to the case of cubic Dirichlet $L$-functions. To be more precise, we reserve the letter $p$ for a rational prime.  Let $\Lambda(n)$ denote the usual von Mangoldt function on $\mz$ and $\chi_0$ the principal Dirichlet character. With minor modifications, our results on cubic
Hecke $L$-functions carry over to the family of cubic Dirichlet $L$-functions whose conductors are rational primes that are norms of primes $\varpi \in \mathcal O_K$ such that $\varpi \equiv 1 \pmod 9$. We have the following result concerning the ratios conjecture of this family of $L$-functions.
\begin{theorem}
\label{Theorem for all charactersQ}
	With the notation as above and assuming the truth of GRH, let $Q$ be a large real number and $\Phi(t)$ be a non-negative Schwartz function with Mellin transform $\widehat \Phi(s)$.  Let $E(\alpha,\beta)$ be as in \eqref{Nab} and $\delta(\alpha)$ as in Theorem \ref{Theorem for all characters}.  Then we have for $\Re(\alpha) > -1/11$ and $\Re(\beta)>0$ such that $E(\alpha,\beta)<1$,
\begin{align}
\label{Asymptotic for ratios of all charactersQ}
\begin{split}	
\sum_{\substack{ p=N(\varpi) \\ \varpi \equiv 1 \bmod {9}} }\; &
\sum_{\substack{\chi \bmod{p} \\ \chi^3 = \chi_0, \ \chi \neq \chi_0}} \Lambda(p) \frac {L(\tfrac{1}{2}+\alpha, \chi)}{L(\tfrac{1}{2}+\beta, \chi)}  \Phi \leg{p}{Q}  \\
=&  \frac {2\M \Phi(1)Q}{\# h_{(9)}} \frac{1-3^{-1/2-\beta}}{1-3^{-1/2-\alpha}}
\frac {\zeta^{(j)}(\tfrac{3}{2}+3\alpha)}{\zeta^{(j)}(\tfrac{3}{2}+2\alpha+\beta)} +O\lz(1+|\alpha|)^{\delta(\alpha)/2+\varepsilon}(1+|\beta|)^{\varepsilon} Q^{E(\alpha,\beta)+\varepsilon}\pz.
\end{split}
\end{align}
\end{theorem}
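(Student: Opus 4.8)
The plan is to reproduce the proof of Theorem~\ref{Theorem for all characters} essentially verbatim: after opening the ratio of $L$-functions and applying orthogonality, both theorems reduce to the same sums of cubic Gauss sums over primes of $\mathcal{O}_K$. Following the structure of that proof (and of \cite{Cech1}), for $\chi$ a cubic character modulo $p$ and $\Re(\alpha)$ large I would write $L(\tfrac12+\alpha,\chi)/L(\tfrac12+\beta,\chi)=\sum_{d,e\geq 1}\mu(e)\chi(de)d^{-1/2-\alpha}e^{-1/2-\beta}$, pass to a smooth truncation, and control the tail under GRH by the same contour shifts as in the Hecke case, now using the degree-one functional equation of $L(s,\chi)$. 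Summing over the two nontrivial cubic Dirichlet characters modulo $p$ — which for any prime $\varpi\mid(p)$ of $\mathcal{O}_K$ are $\chi_{j,\varpi}$ and $\chi_{j,\varpi}^2$ restricted to $\mz$ — orthogonality gives
\[
\sum_{\substack{\chi\bmod p\\ \chi^3=\chi_0,\ \chi\neq\chi_0}}\chi(de)=\chi_{j,\varpi}(de)+\chi_{j,\varpi}^2(de)\qquad\text{for }(de,p)=1,
\]
where $\chi_{j,\varpi}$ is evaluated at the rational integer $de\in\mathcal{O}_K$; the difference between the right-hand side and $\sum_{\chi^3=\chi_0}\chi(de)$ is supported on $p\mid de$ and contributes negligibly after the sum over $p$.

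Next I would perform the sum over $p=N(\varpi)$ with $\varpi\equiv 1\bmod 9$, weighted by $\Lambda(p)=\log p=\Lambda_K(\varpi)$, using $\chi_{j,\varpi}^2(m)=\chi_{j,\overline{\varpi}}(m)$ for $m\in\mz$. Because the condition ``$p=N(\varpi)$ for some primary $\varpi\equiv 1\bmod 9$'' is stable under conjugation and, for a split $p$, is met by both primes above $p$ or by neither, summing the two-character expression over such $p$ collapses to $\sum_{\varpi}\Lambda_K(\varpi)\,\chi_{j,\varpi}(de)\,\Phi(N(\varpi)/Q)$, with $\varpi$ now running over \emph{all} primary primes of $\mathcal{O}_K$ of prime norm satisfying $\varpi\equiv 1\bmod 9$, each of $\varpi$ and $\overline{\varpi}$ counted once; this doubling accounts for the factor $2$ in the main term of \eqref{Asymptotic for ratios of all charactersQ}. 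This sum is exactly of the shape handled in the proof of Theorem~\ref{Theorem for all characters}: detect $\varpi\equiv 1\bmod 9$ by ray class group characters modulo $(9)$, then dispatch the inner sums using \cite[Theorem~4.4]{DG22} on cubic Gauss sums over primes together with the GRH-conditional continuation of the associated multiple Dirichlet series. The primes entering the Hecke version but not this one — primary associates of inert rational primes, of norm a square — contribute $O(Q^{1/2+\varepsilon})$, absorbed into the error; the same remark shows the restriction $\varpi\equiv 1\bmod 9$, inessential for the existence of cubic characters modulo $p$, costs nothing here.

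It remains to reassemble the main and error terms. The diagonal contribution, coming in the Hecke case from $\mathfrak m\mathfrak n$ ranging over cubes of ideals of $\mathcal{O}_K$, now comes from $de$ ranging over cubes of rational integers, so resumming the Euler product yields $\zeta^{(j)}(\tfrac32+3\alpha)/\zeta^{(j)}(\tfrac32+2\alpha+\beta)$ and the factor $(1-3^{-1/2-\beta})/(1-3^{-1/2-\alpha})$ in place of the $\zeta^{(j)}_K$-versions in \eqref{Asymptotic for ratios of all characters}, while $\M\Phi$ replaces $\M w$ and $Q\asymp N(\varpi)$ replaces $X$. Every error estimate in the proof of Theorem~\ref{Theorem for all characters} then carries over with $Q$ in place of $X$, giving the same exponent $E(\alpha,\beta)$ of \eqref{Nab}. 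The one genuine change is the power of $1+|\alpha|$: for $\Re(\alpha)<0$ the bound on the shifted $L$-value now rests on the functional equation and GRH for a degree-one Dirichlet $L$-function over $\mq$, of analytic conductor $\asymp p(1+|\alpha|)$ rather than $\asymp N(\varpi)(1+|\alpha|)^2$, and propagating this through the optimisation halves the relevant exponent, replacing $\delta(\alpha)$ by $\delta(\alpha)/2$.

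The deep inputs — the GRH-conditional continuation and growth of the multiple Dirichlet series, and \cite[Theorem~4.4]{DG22} — should need no change, so the real work is the combinatorial translation above. I expect the last point to be the fussiest: confirming that the main term rebuilds with $\zeta^{(j)}$ rather than $\zeta^{(j)}_K$, and especially pinning down the precise $1+|\alpha|$-dependence, since this requires revisiting which of the several bounds that combine into $E(\alpha,\beta)$ actually carries the $(1+|\alpha|)^{\delta(\alpha)}$ factor in the Hecke argument and checking that the gain there is exactly a factor $\tfrac12$ in the exponent.
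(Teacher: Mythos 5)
Your proposal follows essentially the same route as the paper's proof: identify the two nontrivial cubic characters modulo $p$ with the restrictions of $\chi_{j,\varpi}$ and $\overline\chi_{j,\varpi}$ to $\mz$; set up a multiple Dirichlet series $B_j(s,w,z)$ in place of $A_{K,j}(s,w,z)$ and run the Section~\ref{sec: mainthm} argument (ray class detection modulo $9$, \cite[Theorem~4.4]{DG22}, GRH bounds, Bochner's theorem); read the main term off the residue at $s=1$, which now rebuilds $\zeta^{(j)}$ instead of $\zeta^{(j)}_K$; and note that the degree-one gamma ratio $\Gamma(\tfrac{1-w}{2})/\Gamma(\tfrac{w}{2})$ is $\ll (1+|w|)^{1/2-\Re(w)}$ by Stirling, which is what halves $\delta(\alpha)$. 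All of these key points are present and correct in your plan.

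The one place where your bookkeeping is off is the factor $2$. The paper does not collapse to a single $\varpi$-sum; it writes the character sum as the pair of sums $\sumstar_{\varpi}\Lambda_K(\varpi)\,L_\mq(\cdot,\chi_{j,\varpi})/L_\mq(\cdot,\chi_{j,\varpi})\,\Phi$ and $\sumstar_{\varpi}\Lambda_K(\varpi)\,L_\mq(\cdot,\overline\chi_{j,\varpi})/L_\mq(\cdot,\overline\chi_{j,\varpi})\,\Phi$, applies the $B_j$-machinery to the first (the residue at $s=1$, hence the main term, carries the factor $\widehat\Phi(1)Q/\#h_{(9)}$), and then observes via the reflection identity analogous to \eqref{sum12rel} that the second sum is the complex conjugate of the first evaluated at $\overline\alpha,\overline\beta$ and therefore contributes the same main term — adding them produces the $2$. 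Your suggested collapse of $\sum_p\sum_\chi$ into a single sum over both $\varpi$ and $\overline\varpi$ is term-for-term equivalent and does not in itself double anything: the residue of $B_j$ at $s=1$ already accounts, via Chebotarev for the ray class condition, for both primes above each split $p$ and delivers $\widehat\Phi(1)Q/\#h_{(9)}$, not twice that. So the clause ``this doubling accounts for the factor $2$'' is not a valid derivation; you should either replace it by the paper's two-sum-plus-conjugation device, or scrutinize carefully how the outer sum over $p$ in \eqref{Asymptotic for ratios of all charactersQ} is to be counted.
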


  Again, taking $\beta \rightarrow \infty$ on both sides of \eqref{Asymptotic for ratios of all charactersQ} leads to the following result concerning the first moment of cubic Dirichlet $L$-functions.
\begin{theorem}
\label{ThmfirstmomentQ}
With the notation as above, assuming the truth of GRH and together with $E(\alpha)$ defined as in \eqref{Ealpha}, we have for $\Re(\alpha)>-1/11$,
\begin{align*}
\begin{split}			
& \sum_{\substack{ p=N(\varpi) \\ \varpi \equiv 1 \bmod {9}} }\;
\sum_{\substack{\chi \bmod{p} \\ \chi^3 = \chi_0, \ \chi \neq \chi_0}} \Lambda(p) L(\tfrac 12+\alpha, \chi) \Phi \leg{p}{Q} =  \frac {2 \M \Phi(1)Q}{\# h_{(9)}}
 \frac{\zeta^{(j)}(\tfrac{3}{2}+3\alpha)}{1-3^{-1/2-\alpha}} +O\lz(1+|\alpha|)^{\delta(\alpha)/2+\varepsilon}Q^{E(\alpha)+\varepsilon}\pz.
\end{split}
\end{align*}	
\end{theorem}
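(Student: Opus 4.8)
The plan is to deduce Theorem~\ref{ThmfirstmomentQ} from Theorem~\ref{Theorem for all charactersQ} by specializing the denominator shift $\beta$, but in order to keep full control of the error term I would not pass formally to $\beta\to\infty$; instead I would take $\beta$ to be a \emph{real} parameter that grows logarithmically with $Q$, say $\beta=\log_2 Q$. Fix $\alpha$ with $\Re(\alpha)>-1/11$. For $Q$ large this choice of $\beta$ has $\Re(\beta)>0$, and the three entries of the maximum in \eqref{Nab} that involve $\Re(\beta)$ — namely $1-\Re(\beta)$, $1-3\Re(\alpha)-2\Re(\beta)$ and $1-\tfrac{13}{15}\Re(\alpha)-\tfrac{2}{15}\Re(\beta)$ — are all negative, so $E(\alpha,\beta)=E(\alpha)$ with $E(\alpha)$ as in \eqref{Ealpha}; in particular $E(\alpha,\beta)<1$ because $\Re(\alpha)>-1/11$. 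Since moreover $(1+|\beta|)^{\varepsilon}=(1+\log_2 Q)^{\varepsilon}\ll Q^{\varepsilon}$, Theorem~\ref{Theorem for all charactersQ} applies with this $\beta$ and renders the left side of \eqref{Asymptotic for ratios of all charactersQ} equal to the main term on its right side plus $O\lz(1+|\alpha|)^{\delta(\alpha)/2+\varepsilon}Q^{E(\alpha)+\varepsilon}\pz$.

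Next I would simplify that main term. Because $\Re(\tfrac32+2\alpha+\beta)$ is large, $\zeta^{(j)}(\tfrac32+2\alpha+\beta)$ sits deep inside its half-plane of absolute convergence, where $\zeta^{(j)}(s)-1\ll 2^{-\Re(s)}$, so $1/\zeta^{(j)}(\tfrac32+2\alpha+\beta)=1+O(Q^{-1})$; likewise $1-3^{-1/2-\beta}=1+O(Q^{-1})$. On the other hand, for $\Re(\alpha)>-1/11$ the factor $\zeta^{(j)}(\tfrac32+3\alpha)/(1-3^{-1/2-\alpha})$ is $O(1)$: the numerator stays in the region of absolute convergence since $\Re(\tfrac32+3\alpha)>1$, and the denominator is bounded away from $0$ since $|3^{-1/2-\alpha}|\le 3^{-9/22}<1$. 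Hence the main term equals $\tfrac{2\M\Phi(1)Q}{\#h_{(9)}}\,\zeta^{(j)}(\tfrac32+3\alpha)/(1-3^{-1/2-\alpha})+O(1)$, and the $O(1)$ is absorbed into $O(Q^{E(\alpha)+\varepsilon})$.

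Finally I would pass from the ratio sum to the first-moment sum by replacing $1/L(\tfrac12+\beta,\chi)$ with $1$. The resulting discrepancy is
\begin{align*}
\sum_{\substack{ p=N(\varpi) \\ \varpi \equiv 1 \bmod {9}} }\;
\sum_{\substack{\chi \bmod{p} \\ \chi^3 = \chi_0, \ \chi \neq \chi_0}} \Lambda(p)\, L(\tfrac12+\alpha,\chi)\lz 1-\frac{1}{L(\tfrac12+\beta,\chi)}\pz \Phi\leg{p}{Q}.
\end{align*}
From $1/L(s,\chi)=\sum_{n\ge1}\mu(n)\chi(n)n^{-s}$ one gets $|1-1/L(\tfrac12+\beta,\chi)|\le\sum_{n\ge2}n^{-1/2-\beta}\ll 2^{-1/2-\beta}\ll Q^{-1}$, uniformly in $\chi$. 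Invoking GRH through the Lindel\"of-type bound $|L(\tfrac12+\alpha,\chi)|\ll (p(1+|\alpha|))^{\max(0,-\Re(\alpha))+\varepsilon}$ (the case $\Re(\alpha)<0$ coming from the functional equation and Stirling), and using that there are at most two cubic characters modulo each prime $p$ and that $\Phi$ is Schwartz (so effectively $p\ll Q^{1+\varepsilon}$), the discrepancy is $\ll (1+|\alpha|)^{\max(0,-\Re(\alpha))+\varepsilon}\,Q^{\max(0,-\Re(\alpha))+\varepsilon}$. Since $\max(0,-\Re(\alpha))<1/11=\delta(\alpha)/2$ when $\Re(\alpha)<0$ and equals $0=\delta(\alpha)/2$ when $\Re(\alpha)\ge0$, while $E(\alpha)\ge\tfrac12$, this is $\ll (1+|\alpha|)^{\delta(\alpha)/2+\varepsilon}Q^{E(\alpha)+\varepsilon}$; combining the three steps gives Theorem~\ref{ThmfirstmomentQ}.

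The only delicate point is this last step. One cannot take $\beta$ to be a fixed constant, for then $1-1/L(\tfrac12+\beta,\chi)$ is merely $O(1)$ and the discrepancy would be of size $Q^{1+\varepsilon}$, swamping the main term; letting $\beta$ grow like $\log Q$ is what renders the discrepancy negligible, while the GRH bound on $|L(\tfrac12+\alpha,\chi)|$ is precisely what confines its $\alpha$-dependence to the allowed factor $(1+|\alpha|)^{\delta(\alpha)/2}$ in the regime $\Re(\alpha)<0$. Everything else is a routine unwinding of Theorem~\ref{Theorem for all charactersQ}.
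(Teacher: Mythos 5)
Your proof is correct and is essentially the rigorization of the paper's one-line argument, which simply says to take $\beta\to\infty$ in Theorem~\ref{Theorem for all charactersQ} and drop the $\beta$-factor in the error term. Choosing $\beta=\log_2 Q$ so that $(1+|\beta|)^{\varepsilon}\ll Q^{\varepsilon}$, $E(\alpha,\beta)=E(\alpha)$, and the discrepancy $\sum\Lambda(p)L(\tfrac12+\alpha,\chi)(1-1/L(\tfrac12+\beta,\chi))\Phi(p/Q)\ll Q^{\max(0,-\Re\alpha)+\varepsilon}$ is precisely the right way to make that limit passage precise, and your observation that a fixed $\beta$ would not suffice is the key point.
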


   Similarly,  taking $\alpha \rightarrow \infty$ on both sides of \eqref{Asymptotic for ratios of all charactersQ} leads to the following result concerning the negative first moment of cubic Dirichlet $L$-functions.
\begin{theorem}
\label{ThmnegfirstmomentQ}
		With the notation as above and assuming the truth of GRH, we have for $\Re(\beta)>0$,
\begin{align*}
\begin{split}			
& \sum_{\substack{ p=N(\varpi) \\ \varpi \equiv 1 \bmod {9}} }\;
\sum_{\substack{\chi \bmod{p} \\ \chi^3 = \chi_0, \ \chi \neq \chi_0}} \frac {\Lambda(p)}{L(\tfrac 12+\beta, \chi)} \Phi \bfrac {p}Q  =  \frac {2 \M \Phi(1)Q}{\# h_{(9)}}(1-3^{-1/2-\beta})
 +O\lz (1+|\beta|)^{\varepsilon} Q^{\max\left\{1/2, \ 1-\Re(\beta) \right\}+\varepsilon}\pz.
\end{split}
\end{align*}
\end{theorem}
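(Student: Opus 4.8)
The plan is to deduce Theorem~\ref{ThmnegfirstmomentQ} from Theorem~\ref{Theorem for all charactersQ} by specializing the numerator shift $\alpha$, just as Theorem~\ref{Thmnegfirstmoment} is obtained from Theorem~\ref{Theorem for all characters}. One cannot literally set $\alpha\to\infty$ in \eqref{Asymptotic for ratios of all charactersQ}, because the factor $(1+|\alpha|)^{\delta(\alpha)/2+\varepsilon}$ in the error term would then grow without bound; instead I would take $\alpha=Q$ (any real $\alpha$ with $\log Q\ll\alpha\ll Q^{O(1)}$ works just as well). For such $\alpha$ we have $\Re(\alpha)>-1/11$ and $\delta(\alpha)=0$, and since the last five entries inside the maximum in \eqref{Nab} tend to $-\infty$ as $\Re(\alpha)\to\infty$, also $E(\alpha,\beta)=\max\{1/2,\,1-\Re(\beta)\}<1$ whenever $\Re(\beta)>0$. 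Theorem~\ref{Theorem for all charactersQ} therefore applies and shows that the left-hand side of \eqref{Asymptotic for ratios of all charactersQ} equals
\[
\frac{2\M\Phi(1)Q}{\# h_{(9)}}\cdot\frac{1-3^{-1/2-\beta}}{1-3^{-1/2-\alpha}}\cdot\frac{\zeta^{(j)}(\tfrac{3}{2}+3\alpha)}{\zeta^{(j)}(\tfrac{3}{2}+2\alpha+\beta)}+O\bigl((1+|\beta|)^{\varepsilon}Q^{\max\{1/2,\,1-\Re(\beta)\}+\varepsilon}\bigr),
\]
the factor $(1+|\alpha|)^{\varepsilon}=(1+Q)^{\varepsilon}$ being absorbed into $Q^{\varepsilon}$.

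Next I would pass from the left-hand side of \eqref{Asymptotic for ratios of all charactersQ} to the sum in Theorem~\ref{ThmnegfirstmomentQ} by replacing $L(\tfrac{1}{2}+\alpha,\chi)$ with $1$. As $\Re(\tfrac{1}{2}+\alpha)$ is large, $|L(\tfrac{1}{2}+\alpha,\chi)-1|\leq\sum_{n\geq 2}n^{-1/2-\alpha}\ll 2^{-\alpha}$ uniformly in $\chi$ and in $p$, while under GRH $|L(\tfrac{1}{2}+\beta,\chi)|^{-1}\ll(p(1+|\beta|))^{\varepsilon}$ uniformly for $\Re(\beta)>0$ (the standard consequence of the bound for $\log L$ on vertical lines to the right of the critical line). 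Since $\sum_p\Lambda(p)\Phi(p/Q)\ll Q$ and each $p$ carries at most two characters $\chi\neq\chi_0$ with $\chi^3=\chi_0$, the two sums in question differ by $\ll 2^{-\alpha}Q^{1+\varepsilon}(1+|\beta|)^{\varepsilon}$, which for $\alpha=Q$ is $\ll 1$ and hence is dominated by the error term above (which is of size at least $Q^{1/2}$).

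It remains to simplify the main term. Since $\zeta^{(j)}(s)=\prod_{p\neq 3}(1-p^{-s})^{-1}$ obeys $|\zeta^{(j)}(s)-1|\ll 2^{-\Re(s)}$ for $\Re(s)\geq 2$, and $|(1-3^{-1/2-\alpha})^{-1}-1|\ll 3^{-\alpha}$ while $|1-3^{-1/2-\beta}|\ll 1$ for $\Re(\beta)\geq 0$, the coefficient of $\tfrac{2\M\Phi(1)Q}{\# h_{(9)}}$ in the displayed formula is $(1-3^{-1/2-\beta})+O(2^{-\alpha})$; multiplying through by $\tfrac{2\M\Phi(1)Q}{\# h_{(9)}}$ thus alters the main term by $\ll Q\,2^{-\alpha}\ll 1$. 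Assembling the three steps gives exactly the asymptotic formula claimed in Theorem~\ref{ThmnegfirstmomentQ}.

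I do not expect a genuine obstacle: the argument is a soft specialization of Theorem~\ref{Theorem for all charactersQ}. The two points requiring some care are (i) the choice of $\alpha$ — it must be large enough that the tail $L(\tfrac{1}{2}+\alpha,\chi)-1$ is negligible, yet polynomially bounded in $Q$ so that $(1+|\alpha|)^{\varepsilon}$ remains harmless, which is exactly why one cannot pass to a true limit; and (ii) the uniformity in $\beta$, as $\Re(\beta)\to 0^{+}$, of the GRH bound for $|L(\tfrac{1}{2}+\beta,\chi)|^{-1}$, which is routine. Alternatively, one could reprove the statement directly by the multiple Dirichlet series method of the cubic Hecke case, but the deduction above is immediate.
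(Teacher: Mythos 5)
Your proof is correct and follows essentially the paper's own route: the paper dispatches this theorem in a single sentence by ``taking $\alpha\to\infty$'' in Theorem~\ref{Theorem for all charactersQ} and dropping the $\alpha$-dependent factor in the error term. Your replacement of the informal limit by the explicit choice $\alpha=Q$ (so that $(1+|\alpha|)^{\varepsilon}$ is absorbed into $Q^{\varepsilon}$ while the tails $L(\tfrac12+\alpha,\chi)-1$, $\zeta^{(j)}(\tfrac32+3\alpha)-1$, etc.\ become $O(1)$ after summation) is the same idea carried out rigorously, and correctly identifies the point the paper elides.
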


   Also, we may differentiate with respect to $\alpha$ in \eqref{Asymptotic for ratios of all charactersQ} and set $\alpha=\beta=r$ to obtain an asymptotic formula for the smoothed first moment of the logarithmic derive of the $L$-functions under consideration.
 \begin{theorem}
\label{Theorem for log derivativesQ}
	With the notation as above and assuming the truth of GRH, we have for $0<\varepsilon< \Re(r)<1/2$,
\begin{align*}
\begin{split}
		& \sum_{\substack{ p=N(\varpi) \\ \varpi \equiv 1 \bmod {9}} }\;
\sum_{\substack{\chi \bmod{p} \\ \chi^3 = \chi_0, \ \chi \neq \chi_0}} \frac {\Lambda(p)L'(\tfrac 12+r, \chi)}{L(\tfrac 12+r, \chi)}\Phi \bfrac {p}Q
			= \frac {2 \M \Phi(1)Q }{\# h_{(9)}}\lz\frac{(\zeta^{(j)}(\frac 32+3r))'}{\zeta^{(j)}(\frac 32+3r)}-\frac{\log 3 }{3^{1/2+r}-1}\pz+O((1+|r|)^{\varepsilon}Q^{1-\Re(r)+\varepsilon}).
\end{split}
\end{align*}
\end{theorem}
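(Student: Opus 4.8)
The plan is to deduce this from the ratios formula \eqref{Asymptotic for ratios of all charactersQ} by differentiating in $\alpha$ and then specializing $\alpha=\beta=r$, in exact parallel with the passage from Theorem~\ref{Theorem for all characters} to Theorem~\ref{Theorem for log derivatives}. I would fix $r$ with $0<\varepsilon<\Re(r)<1/2$, keep $\beta=r$ throughout, and set
\begin{align*}
F(\alpha):=\sum_{\substack{ p=N(\varpi) \\ \varpi \equiv 1 \bmod {9}} }\;
\sum_{\substack{\chi \bmod{p} \\ \chi^3 = \chi_0, \ \chi \neq \chi_0}} \Lambda(p)\,\frac {L(\tfrac{1}{2}+\alpha, \chi)}{L(\tfrac12+r,\chi)}\,\Phi \bfrac {p}Q.
\end{align*}
Under GRH the $L$-function of every cubic character $\chi$ in this sum has all of its nontrivial zeros on $\Re(s)=1/2$, so $L(\tfrac12+r,\chi)\neq0$; combined with the GRH bounds $L(\tfrac12+\alpha,\chi)\ll_{\varepsilon}(p(1+|\alpha|))^{\varepsilon}$ and $|L(\tfrac12+r,\chi)|^{-1}\ll_{\varepsilon}(p(1+|r|))^{\varepsilon}$ and the rapid decay of $\Phi$, this shows the double sum converges locally uniformly for $\alpha$ in a fixed disc about $r$. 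Hence $F$ is holomorphic there, and differentiating term by term identifies $F'(r)$ with the left-hand side of the identity to be proved, since $\frac{\partial}{\partial\alpha}L(\tfrac12+\alpha,\chi)=L'(\tfrac12+\alpha,\chi)$ reduces at $\alpha=r$ to $L'(\tfrac12+r,\chi)$.

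Next I would represent $F'(r)$ by Cauchy's formula, $F'(r)=\frac1{2\pi i}\oint_{|\alpha-r|=\rho}F(\alpha)(\alpha-r)^{-2}\,\dif\alpha$, for a small positive constant $\rho=\rho(\varepsilon)$. On the circle $|\alpha-r|=\rho$ one has $\Re(\alpha)>-1/11$ whenever $\rho<\varepsilon$, and, crucially, $E(\alpha,r)<1$: writing $\sigma=\Re(\alpha)$, so that $|\sigma-\Re(r)|\le\rho$, every one of the six quantities in \eqref{Nab} with $\beta=r$ is at most $1-\Re(r)+O(\rho)$ — the estimates for $\tfrac12$, $\tfrac56-\sigma$, $1-\Re(r)$, $1-3\sigma-2\Re(r)$ and $1-\tfrac{13}{15}\sigma-\tfrac2{15}\Re(r)$ are immediate, and $\tfrac{12}{13}-\tfrac{11}{13}\sigma\le 1-\Re(r)+O(\rho)$ holds precisely because $\Re(r)<1/2$. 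Thus \eqref{Asymptotic for ratios of all charactersQ} applies on the whole contour, with $\delta(\alpha)=0$ there since $\Re(\alpha)>0$, and writing $G(\alpha,\beta):=\frac{1-3^{-1/2-\beta}}{1-3^{-1/2-\alpha}}\cdot\frac{\zeta^{(j)}(\tfrac{3}{2}+3\alpha)}{\zeta^{(j)}(\tfrac{3}{2}+2\alpha+\beta)}$ it gives
\begin{align*}
F(\alpha)=\frac {2\M \Phi(1)Q}{\# h_{(9)}}\,G(\alpha,r)+O\lz (1+|r|)^{\varepsilon}Q^{E(\alpha,r)+\varepsilon}\pz.
\end{align*}

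Substituting this expansion into the contour integral, the error term contributes $\ll \rho^{-1}(1+|r|)^{\varepsilon}Q^{\max_{|\alpha-r|=\rho}E(\alpha,r)+\varepsilon}\ll (1+|r|)^{\varepsilon}Q^{1-\Re(r)+\varepsilon}$ once $\rho$ is chosen small enough in terms of $\varepsilon$ (one checks $E(r,r)=1-\Re(r)$ for all $r$ with $0<\Re(r)<1/2$), which is the error asserted. The main term is $\frac{2\M \Phi(1)Q}{\# h_{(9)}}$ times $\frac1{2\pi i}\oint_{|\alpha-r|=\rho}G(\alpha,r)(\alpha-r)^{-2}\,\dif\alpha=\partial_{\alpha}G(\alpha,r)\big|_{\alpha=r}$, the use of Cauchy being valid because $G(\cdot,r)$ is holomorphic near $r$: both zeta factors are evaluated at points of real part $>1$, where $\zeta^{(j)}$ is analytic and nonvanishing, and $1-3^{-1/2-\alpha}\neq0$ for $\alpha$ near $r$ since $\Re(r)>0$. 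Since $G(r,r)=1$, logarithmic differentiation then yields
\begin{align*}
\partial_{\alpha}G(\alpha,r)\big|_{\alpha=r}
&= -\frac{\log 3\cdot 3^{-1/2-r}}{1-3^{-1/2-r}}+3\,\frac{(\zeta^{(j)}(\tfrac32+3r))'}{\zeta^{(j)}(\tfrac32+3r)}-2\,\frac{(\zeta^{(j)}(\tfrac32+3r))'}{\zeta^{(j)}(\tfrac32+3r)} \\
&= \frac{(\zeta^{(j)}(\tfrac 32+3r))'}{\zeta^{(j)}(\tfrac 32+3r)}-\frac{\log 3 }{3^{1/2+r}-1},
\end{align*}
which is exactly the main term in the theorem; collecting the pieces finishes the deduction. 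The one place that needs genuine care is the contour estimate in the second and third steps — verifying both that the disc $|\alpha-r|\le\rho$ lies inside the domain of validity of \eqref{Asymptotic for ratios of all charactersQ} and that $\max_{|\alpha-r|\le\rho}E(\alpha,r)=1-\Re(r)+O(\rho)$ — which rests on the explicit form of $E(\alpha,\beta)$ in \eqref{Nab} and on the hypothesis $\Re(r)<1/2$; beyond this, no new ingredient is needed apart from Theorem~\ref{Theorem for all charactersQ}, GRH (used only to make $F$ holomorphic), and elementary complex analysis.
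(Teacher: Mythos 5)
Your proposal is correct and follows essentially the same route as the paper's proof of Theorem~\ref{Theorem for log derivatives}: differentiate the ratios formula~\eqref{Asymptotic for ratios of all charactersQ} in $\alpha$ at fixed $\beta=r$, compute the derivative of the main term in closed form via logarithmic differentiation (using $G(r,r)=1$), and control the derivative of the error with Cauchy's integral formula over a small circle $|\alpha-r|=\rho$, on which $\Re(\alpha)>-1/11$, $\delta(\alpha)=0$, and $E(\alpha,r)\le 1-\Re(r)+O(\rho)$. The only cosmetic difference is that the paper first writes the left-hand side as $QM(\alpha,\beta)+R(Q,\alpha,\beta)$ with $R$ analytic, differentiates $QM$ directly, and applies Cauchy only to $R$, whereas you apply Cauchy to the whole function $F$ and split main term from error inside the contour integral; these are equivalent.
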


   Further, Theorem \ref{Theorem for log derivativesQ} allows one to compute the one-level density of low-lying zeros of the corresponding families of cubic Dirichlet $L$-functions. Let $h(x)$ be an even Schwartz function taken earlier whose Fourier transform being supported in $[-a,a]$ for some $a>0$, then we define the one-level density of the family of cubic Dirichlet $L$-functions under consideration by
\begin{align*}
\begin{split}
		D_{\mq}(Q;h)=\frac{1}{F_{\mq}(Q)}\sum_{\substack{ p=N(\varpi) \\ \varpi \equiv 1 \bmod {9}} }\;
\sum_{\substack{\chi \bmod{p} \\ \chi^3 = \chi_0, \ \chi \neq \chi_0}} \Lambda(p) \Phi \bfrac {p}Q\sum_{\gamma_{\chi, n}}h\bfrac{\gamma_{\chi, n}\log Q}{2\pi},
\end{split}
\end{align*}
   where $\gamma_{\chi, n}$ runs over the imaginary parts of the non-trivial zeros of $L(s,\chi)$, and
\begin{align*}
\begin{split}
		F_{\mq}(Q)&=\sum_{\substack{ p=N(\varpi) \\ \varpi \equiv 1 \bmod {9}} }\;
\sum_{\substack{\chi \bmod{p} \\ \chi^3 = \chi_0, \ \chi \neq \chi_0}} \Lambda(p) \Phi \bfrac {p}Q.
\end{split}
\end{align*}
	
   Our conclude this section with the following result that computes $D_{\mq}(X;h)$ asymptotically.
\begin{theorem}
\label{Theorem one-level densityQ}
	With the notation as above and assuming the truth of GRH, for any function $\Phi(t)$ that is non-negative and compactly supported on the set of positive real numbers, we have
\begin{align*}
\begin{split}
		D_{\mq}(Q;h)=\frac {2\widehat h(1)}{F_{\mq}(Q)\log Q} & \sum_{\substack{ p=N(\varpi) \\ \varpi \equiv 1 \bmod {9}} }\;
\sum_{\substack{\chi \bmod{p} \\ \chi^3 = \chi_0, \ \chi \neq \chi_0}} \Lambda(p) \Phi \bfrac {p}Q \log p \\
& +\frac{1}{F_{\mq}(Q)}\frac {4 \M w(1) }{\# h_{(9)}} \cdot \frac{Q}{\log Q}\int\limits_{-\infty}^{\infty}h(u)\lz\frac{(\zeta^{(j)}(\tfrac{3}{2}+ \frac {6\pi iu}{\log Q}))'}{\zeta^{(j)}(\tfrac{3}{2}+\frac {6\pi iu}{\log Q})}-\frac{\log 3 }{3^{1/2+ 2\pi iu/\log Q}-1}\pz \dif u \\
&+\frac 1{\log Q}\int\limits^{\infty}_{-\infty}h\lz u \pz \lz \frac {\Gamma'}{\Gamma} \left( \frac 14+\frac {\pi iu}{\log Q} \right) +\frac {\Gamma'}{\Gamma} \left( \frac 14-\frac {\pi iu}{\log Q} \right) \pz \dif u+\frac {2\widehat h(1)}{\log Q} \log \frac {1}{\pi} +O(Q^{(1+a)/2+\varepsilon}).
\end{split}
\end{align*}
  In particular, for $a<1$, we have
\begin{align*}
\begin{split}
		D_{\mq}(Q;h)= \int\limits^{\infty}_{-\infty}h(x) \dif x & +\frac{2\widehat h(1)}{\log Q} \lz \frac{2(\zeta^{(j)}(\frac 32))'}{\zeta^{(j)}(\frac 32)}-\frac{2\log 3 }{3^{1/2}-1}  + \frac {1}{\widehat w(1)} \left( \int\limits^{\infty}_0w(u)\log u \dif u \right)+2\frac {\Gamma'}{\Gamma} \left( \frac 14 \right) +\log \frac {1}{\pi} \pz \\
		& +O \left( \frac 1{(\log Q)^2} \right).
\end{split}
\end{align*}
\end{theorem}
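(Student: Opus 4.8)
\emph{Proof proposal.} I would follow the same route as for Theorem~\ref{Theorem one-level density}, which itself mirrors the passage from \cite[Theorem~1.4]{Cech1} to \cite[Corollary~1.5]{Cech1}. The differences from the Eisenstein case are purely cosmetic: one uses the functional equation of the Dirichlet $L$-functions $L(s,\chi)$ — a cubic character $\chi$ is even, so its completed function is $\Lambda(s,\chi)=(p/\pi)^{s/2}\Gamma(s/2)L(s,\chi)$, of conductor $p$ — in place of that of the cubic Hecke $L$-functions, and one invokes Theorem~\ref{Theorem for log derivativesQ} in place of Theorem~\ref{Theorem for log derivatives}; this is exactly what replaces $\zeta^{(j)}_K$ by $\zeta^{(j)}$, the archimedean factor $\Gamma(s)$ by $\Gamma(s/2)$, and $|D_K|/(4\pi^2)$ by $1/\pi$ in the output.

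First set $g(s)=h\big((s-\tfrac12)\log Q/(2\pi i)\big)$. Since $h$ is even and Schwartz with $\widehat h$ supported in $[-a,a]$, the function $g$ is entire, decays faster than any polynomial on vertical lines, satisfies $g(1-s)=g(s)$, and obeys $g(\sigma+it)\ll_{N}(1+|t|\log Q)^{-N}Q^{a|\sigma-1/2|}$. Under GRH every zero of $L(s,\chi)$ lies on $\Re s=\tfrac12$, so the argument principle yields, for each nontrivial cubic $\chi\bmod p$ and each $r\in(0,\tfrac12)$,
\begin{align*}
\sum_{\gamma_{\chi,n}}h\!\left(\frac{\gamma_{\chi,n}\log Q}{2\pi}\right)=\frac{1}{2\pi i}\left(\int_{(1/2+r)}-\int_{(1/2-r)}\right)\frac{\Lambda'}{\Lambda}(s,\chi)\,g(s)\,\dif s .
\end{align*}
Using $\Lambda(s,\chi)=\epsilon_\chi\Lambda(1-s,\bar\chi)$ together with $g(1-s)=g(s)$ to fold the line $\Re s=\tfrac12-r$ onto $\Re s=\tfrac12+r$, and then $\tfrac{\Lambda'}{\Lambda}(s,\chi)=\tfrac12\log\tfrac p\pi+\tfrac12\tfrac{\Gamma'}{\Gamma}(\tfrac s2)+\tfrac{L'}{L}(s,\chi)$, the right side becomes $\tfrac1{2\pi i}\int_{(1/2+r)}\big(\log\tfrac p\pi+\tfrac{\Gamma'}{\Gamma}(\tfrac s2)+\tfrac{L'}{L}(s,\chi)+\tfrac{L'}{L}(s,\bar\chi)\big)g(s)\,\dif s$. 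I would substitute this into the definition of $D_{\mathbb Q}(Q;h)$ and interchange the two sums.

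The contribution of $\log\tfrac p\pi+\tfrac{\Gamma'}{\Gamma}(\tfrac s2)$, holomorphic for $\Re s>0$, is computed by shifting its contour to $\Re s=\tfrac12$ and setting $s=\tfrac12+2\pi iu/\log Q$; using $\Lambda(p)=\log p$, $\tfrac1{2\pi i}\int_{(1/2)}g(s)\,\dif s=\tfrac1{\log Q}\int_{\mathbb R}h(u)\,\dif u$, and that the $\Gamma$-piece is $\chi$-independent and hence its family average collapses against $F_{\mathbb Q}(Q)=\sum_{p,\chi}\Lambda(p)\Phi(p/Q)$, one recovers the weighted prime sum, the $\tfrac{\Gamma'}{\Gamma}(\tfrac14\pm\tfrac{\pi iu}{\log Q})$ integral, and the $\log\tfrac1\pi$ term of the asserted identity. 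For the two $L'/L$ pieces, reindexing $\chi\leftrightarrow\bar\chi$ gives $\sum_{p,\chi}\Lambda(p)\Phi(p/Q)\big(\tfrac{L'}{L}(s,\chi)+\tfrac{L'}{L}(s,\bar\chi)\big)=2\sum_{p,\chi}\Lambda(p)\Phi(p/Q)\tfrac{L'}{L}(s,\chi)$, and on the line $\Re s=\tfrac12+r$ with $\varepsilon<r<\tfrac12$ — so that the hypothesis of Theorem~\ref{Theorem for log derivativesQ} holds for the shift $s-\tfrac12$ — that theorem evaluates this to
\begin{align*}
\frac{4\widehat\Phi(1)Q}{\#h_{(9)}}\left(\frac{\big(\zeta^{(j)}(3s)\big)'}{\zeta^{(j)}(3s)}-\frac{\log 3}{3^{s}-1}\right)+O\!\left((1+|s|)^{\varepsilon}Q^{1-\Re s+\varepsilon}\right).
\end{align*}
Inserting the main term into $\tfrac1{2\pi i}\int_{(1/2+r)}g(s)(\,\cdot\,)\,\dif s$, shifting the contour back to $\Re s=\tfrac12$ — legitimate because $\zeta^{(j)}(3s)$ is holomorphic and non-vanishing and $3^{s}\ne1$ throughout $\tfrac12\le\Re s\le\tfrac12+r$ — and substituting $s=\tfrac12+2\pi iu/\log Q$ yields the $\tfrac{Q}{\log Q}\!\int h(u)\big(\cdots\big)\dif u$ term of the identity; dividing through by $F_{\mathbb Q}(Q)$ puts it in the stated shape.

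The only delicate point is the error term. Feeding the error of Theorem~\ref{Theorem for log derivativesQ} into $\tfrac1{2\pi i}\int_{(1/2+r)}g(s)(\,\cdot\,)\,\dif s$ and using $g(\tfrac12+r+it)\ll_{N}(1+|t|\log Q)^{-N}Q^{ar}$ gives a contribution $\ll Q^{1-r+\varepsilon}\cdot Q^{ar}/\log Q=Q^{1-r(1-a)+\varepsilon}/\log Q$ to $\sum_{p,\chi}\Lambda(p)\Phi(p/Q)\sum_{\gamma}h(\,\cdot\,)$; taking $r=\tfrac12-\varepsilon$, the largest value Theorem~\ref{Theorem for log derivativesQ} allows, optimizes this to $O\!\big(Q^{(1+a)/2+\varepsilon}\big)$, which, after division by $F_{\mathbb Q}(Q)\asymp Q$ (indeed $F_{\mathbb Q}(Q)\sim\tfrac{2\widehat\Phi(1)Q}{\#h_{(9)}}$ by the prime number theorem), is the error recorded in the theorem. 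The obstacle is exactly this balance: one wants $r$ close to $\tfrac12$ to exploit the power saving $Q^{1-r}$ in Theorem~\ref{Theorem for log derivativesQ}, but the factor $Q^{ar}$, forced by the exponential type $\asymp a\log Q$ of the Paley--Wiener transform $g$, works against it, so the method beats the trivial size $\asymp Q$ of the family precisely when $a<1$ — the range in which the one-level density is informative. For the simplification valid there I would Taylor-expand $\tfrac{(\zeta^{(j)})'}{\zeta^{(j)}}(\tfrac32+\tfrac{6\pi iu}{\log Q})$, $\tfrac{\log 3}{3^{1/2+2\pi iu/\log Q}-1}$ and $\tfrac{\Gamma'}{\Gamma}(\tfrac14\pm\tfrac{\pi iu}{\log Q})$ about $u=0$ (the linear terms contributing $O(1/(\log Q)^2)$ by the decay of $h$), combine with $F_{\mathbb Q}(Q)=\tfrac{2\widehat\Phi(1)Q}{\#h_{(9)}}\big(1+O(Q^{-1/2+\varepsilon})\big)$ and $\tfrac1{F_{\mathbb Q}(Q)}\sum_{p,\chi}\Lambda(p)\Phi(p/Q)\log(p/Q)=\tfrac{\widehat\Phi'(1)}{\widehat\Phi(1)}+O(Q^{-1/2+\varepsilon})$ (both obtained from the first-moment asymptotics above, the latter by differentiating in an auxiliary exponent), and collect terms; the resulting error $O(1/(\log Q)^2)$ dominates the $O(Q^{(a-1)/2+\varepsilon})$ from the previous step once $a<1$ is fixed.
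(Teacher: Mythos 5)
Your proposal is correct and follows essentially the route the paper intends (and which it carries out explicitly for the Eisenstein case in Section~\ref{Section one-level density} and declares analogous here): apply the argument principle under GRH, fold via the functional equation, feed the $L'/L$ contribution through Theorem~\ref{Theorem for log derivativesQ} on a line $\Re s=\tfrac12+r$ with $r=\tfrac12-\varepsilon$, shift the main term back to $\Re s=\tfrac12$, and treat the $\Gamma'/\Gamma(s/2)+\log(p/\pi)$ piece separately — the sole substantive changes from the Hecke case being the replacement of the gamma factor $\Gamma(s)$ by $\Gamma(s/2)$ (whence $\Gamma'/\Gamma(\tfrac14\pm\cdot)$) and of $|D_K|N(\varpi)/(4\pi^2)$ by $p/\pi$ (whence $\log\tfrac1\pi$), exactly as you note. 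The small deviations from the paper's presentation — starting the contour directly at $\Re s=\tfrac12\pm r$ rather than at $\Re s=2$ and $-1$, and handling the two $L'/L$ terms by reindexing $\chi\leftrightarrow\bar\chi$ in the character sum rather than via the conjugation identity used in \eqref{Sum of L'/L with removed 2-factors1} — are cosmetic and do not affect the substance or the error analysis.
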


\section{Preliminaries}
\label{sec 2}

\subsection{Cubic residue symbols and Gauss sums}
\label{sec2.4}
   Recall that we write $K=\mq(\omega)$ throughout the paper. It is well-known that $K$ has class number one and $\mathcal{O}_{K}=\mz[\omega]$.
We use $D_K=-3$ for the different and discriminant of $K$, respectively.
The cubic residue symbol $\leg {\cdot}{\cdot}_3$  is defined for any prime $\varpi$ co-prime to $3$ in $\mathcal{O}_{K}$, such that
 we have $\leg{a}{\varpi}_3 \equiv
a^{(N(\varpi)-1)/3} \pmod{\varpi}$ with $\leg{a}{\varpi}_3 \in \{
1, \omega, \omega^2 \}$ for any $a \in \mathcal{O}_{K}$, $(a, \varpi)=1$. We also define
$\leg{a}{\varpi}_3 =0$ when $\varpi | a$. The definition for the cubic symbol is then extended
to any composite $n$ with $(N(n), 3)=1$ multiplicatively. We further define $\leg{a}{u}_3=1$ for any $a \in \mathcal O_K, u \in U_K$. Recall that we reserve
the letter $j$ for $3$ so that we shall henceforth write $\chi_{j, a}$ for the symbol $\leg {\cdot}{a}_{3}$ and $\chi^{(a)}_j$ for the symbol $\leg {a}{\cdot}_{3}$ for any $a \in \mathcal{O}_K$ such that $(N(a), 3)=1$. \newline

Recall that every ideal in $\mathcal O_K$ co-prime to $j$ has a unique primary generator congruent to $1$ modulo $3$. Moreover, an element $n=a+b\omega$ in $\mz[\omega]$ and $n \equiv 1 \pmod{3}$ if and only if $a \equiv 1 \pmod{3}$, and $b \equiv
0 \pmod{3}$ (see the discussions before \cite[Proposition 9.3.5]{I&R}). \newline

  We refer the reader to \cite[Proposition 3.7]{Lemmermeyer} for the cubic reciprocity law as well as the supplementary laws concerning the cubic residue symbol. We only point out here that the symbol $\chi_{j, a}$ is trivial on $U_K$ for any $a \equiv 1 \pmod {9}$. Moreover, notice that the ideal $(1-\omega)$ is the only ideal in $\mathcal O_K$ that lies above the rational ideal $(3)$ in $\mz$ and that the supplement laws to the cubic reciprocity law (see \cite[Proposition 3.7]{Lemmermeyer}) imply that
\begin{align}
\label{cubicsupplyment}
 \leg {1-\omega}{c}_{3}=1, \quad c \equiv 1 \pmod {9}.
\end{align}

  It follows that $\chi_{j, a}$  can be regarded as a primitive Hecke character modulo $\varpi$ of trivial infinite type. Note also that the symbol $\chi^{(a)}_{j}$  is also a Hecke character modulo $a$ of trivial infinite type. In the rest of the paper, we shall adapt the point of view to by treating both $\chi_{j, \varpi}$ and $\chi^{(a)}_{j}$  as Hecke characters.  \newline

  Let $e(z) = \exp (2 \pi i z)$ for any complex number $z$ and
\begin{align*}
   \widetilde{e}_K(z) =\exp \left( 2\pi i  \left( \frac {z}{\sqrt{D_K}} -
\frac {\overline{z}}{\sqrt{D_K}} \right) \right).
\end{align*}
  For any Hecke character $\chi$ modulo $q$ of trivial infinite type and any $k \in \mathcal O_K$, we define the associated Gauss sum $g_K(k, \chi)$ by
\begin{align*}
 g_K(k,\chi) = \sum_{x \shortmod{q}} \chi(x) \widetilde{e}_K\leg{kx}{q}.
\end{align*}
  Note that our definition above for $g_K(k,\chi)$ is independent of the choice of a generator for $(q)$ since any Hecke character of trivial infinite type
is trivial on $U_K$ (see \cite[(3.80)]{iwakow}). Observe that we have
\begin{align}
\label{2.7}
   g_K(rs,\chi)=\overline{\chi}(s) g_K(r,\chi), \qquad (s,n)=1.
\end{align}

   We write $g_K(\chi)$ for $g_K(1,\chi)$ and $g_{K, j}(\varpi), g_{K,j}(k, \varpi)$ for $g_K(\chi_{j, \varpi}), g_K(k, \chi_{j, \varpi})$,
respectively.

\subsection{Functional equations for Hecke $L$-functions}
	
	For any primitive Hecke character $\chi$ of trivial infinite type modulo $q$, a well-known result of E. Hecke shows that $L(s, \chi)$ has an
analytic continuation to the whole complex plane and satisfies the
functional equation (see \cite[Theorem 3.8]{iwakow})
\begin{align}
\label{fneqn}
  \Lambda(s, \chi) = W(\chi)\Lambda(1-s, \overline \chi), \; \mbox{where} \;  W(\chi) = g_K(\chi)(N(q))^{-1/2}
\end{align}
and
\begin{align}
\label{Lambda}
  \Lambda(s, \chi) = (|D_K|N(q))^{s/2}(2\pi)^{-s}\Gamma(s)L(s, \chi).
\end{align}

  We apply \eqref{fneqn} and\eqref{Lambda} to the case $\chi=\chi_{j, \varpi}$ for a primary prime $\varpi \equiv 1 \pmod 9$ to deduce that
\begin{align}
\label{fneqnL}
  L(s, \chi_{j, \varpi})=g_{K,j}(\varpi)|D_K|^{1/2-s}N(\varpi)^{-s}(2\pi)^{2s-1}\frac {\Gamma(1-s)}{\Gamma (s)}L(1-s, \overline \chi_{j, \varpi}).
\end{align}

  Note that Stirling's formula (see \cite[(5.113)]{iwakow}) yields for constants $c_0, d_0$,
\begin{align}
\label{Stirlingratio}
  \frac {\Gamma(c_0(1-s)+ d_0)}{\Gamma (c_0s+ d_0)} \ll (1+|s|)^{c_0(1-2\Re (s))}.
\end{align}

\subsection{Estimations on $L$-functions}

  Recall that for any Hecke character $\chi$ of trivial infinite type, the corresponding $L$-function $L(s, \chi)$ has an Euler product for $\Re(s)$ large enough given by
\begin{align*}
 L(s, \chi)=\prod_{(\varpi)} \left( 1-\frac {\chi(\varpi)}{N(\varpi)^s} \right)^{-1},
\end{align*}
  where we denote $(n)$ for the ideal generated by any $n \in \mathcal O_K$ throughout the paper.  Taking the logarithmic derivatives both sides above then gives that for $\Re(s)$ large enough,
\begin{align*}
 -\frac {L'(s, \chi)}{L(s, \chi)}=\sum_{(n)}\frac {\Lambda_{K}(n)\chi(n)}{N(n)^s}.
\end{align*}

  Now let $\widehat \chi^{(m)}_j$ be the primitive character inducing $\chi^{(m)}_j$ so that
\begin{align*}
\begin{split}
  L(s, \chi^{(m)}_j)=L(s, \widehat \chi^{(m)}_j)\prod_{\substack{ (\varpi) \\ \varpi|m}}(1-\widehat \chi^{(m)}_j(\varpi)N(\varpi)^{-s}).
\end{split}
\end{align*}
This leads to
\begin{align*}
-\frac {L'(s, \chi^{(m)}_j)}{L(s, \chi^{(m)}_j)}=-\frac {L'(s, \widehat \chi^{(m)}_j)}{L(s, \widehat \chi^{(m)}_j)}-\sum_{\substack{ (\varpi) \\ \varpi|m}}(1-\widehat \chi^{(m)}_j(\varpi)N(\varpi)^{-s})^{-1}\widehat \chi^{(m)}_j(\varpi)(\log N(\varpi))N(\varpi)^{-s}.
\end{align*}

   It follows from  \cite[Theorem 5.17]{iwakow} that under GRH, for $\Re(s) \geq 1/2+\varepsilon$,
\begin{align} \label{Lderbound}
  -(s-1) \frac {L'(s, \widehat \chi^{(m)}_j)}{L(s, \widehat \chi^{(m)}_j)}  \ll 1+|s-1|\log \big ((N(m)+2)(1+|s|)\big).
\end{align}

Let $\mathcal{W}_K(n)$ denote the number of distinct prime factors of $n$.  We have
(derived in a fashion similar to the proof of the classical case over $\mq$ \cite[Theorem 2.10]{MVa1})
\begin{align}
\label{omegabound}
   \mathcal{W}_K(h) \ll \frac {\log N(h)}{\log \log N(h)}, \quad \mbox{for} \quad N(h) \geq 3.
\end{align}

   The above, together with \eqref{Lderbound} implies, that under GRH, for $\Re(s) \geq 1/2+\varepsilon$,
\begin{align}
\label{Lderboundgen}
  -(s-1) \frac {L'(s, \chi^{(m)}_j)}{L(s, \chi^{(m)}_j)}  \ll |s-1|\big((N(m)+2)(1+|s|)\big)^{\varepsilon}.
\end{align}

 Next, \cite[Theorem 5.19]{iwakow} asserts that when $\Re(s) \geq 1/2+\varepsilon$, under GRH,
\begin{align}
\label{PgLest1}
\begin{split}
& L( s,  \widehat\chi^{(m)}_j )^{-1} \ll |sN(m)|^{\varepsilon}.
\end{split}
\end{align}

  Applying \eqref{omegabound}, we see that
\begin{align}
\label{Lninvbound}
\begin{split}
 \Big | \prod_{\substack { (\varpi) \\ \varpi | m}}(1-\widehat \chi^{(m)}_j(\varpi)N(\varpi)^{-s})^{-1} \Big | \ll
N(m)^{\varepsilon}, \quad \Re(s) \geq 1/2+\varepsilon.
\end{split}
\end{align}

From \eqref{PgLest1} and \eqref{Lninvbound}, under GRH, for $\Re(s) \geq 1/2+\varepsilon$,
\begin{align}
\label{PgLest2}
\begin{split}
& L( s,  \chi^{(m)}_j )^{-1} \ll |sN(m)|^{\varepsilon}.
\end{split}
\end{align}

    Similarly, we derive from \cite[Theorem 5.19, Corollary 5.20]{iwakow} that under the generalized Lindel\"of hypothesis (a consequence of GRH), for $\Re(s) \geq 1/2$,
\begin{align}
\label{Lboundgen}
  (s-1)L(s, \chi^{(m)}_j) \ll |s-1|\big((N(m)+2)(1+|s|)\big)^{\varepsilon}.
\end{align}

  The above arguments apply to $L(s, \chi_{j, \varpi})$ for any $\varpi \equiv 1 \pmod 9$ as well, so that
\begin{align}
\label{Lboundgen1}
\begin{split}
  L( s,  \chi_{j,\varpi} )^{-1} \ll & |sN(\varpi)|^{\varepsilon}, \quad \Re(s) \geq 1/2 + \varepsilon, \\
  L(s, \chi_{j,\varpi}) \ll & |s-1|\big((N(m)+2)(1+|s|)\big)^{\varepsilon}, \quad \Re(s) \geq 1/2+\varepsilon.
\end{split}
\end{align}

\subsection{Analytic behavior of Dirichlet series associated with cubic Gauss sums}
\label{section: smooth Gauss}
  For any ray class character $\psi \pmod {9}$ and any primary $r$, we define
\begin{align}
\label{h}
   h(r,s;\psi)=\sum_{\substack{\varpi \text{ primary} \\ (\varpi,r)=1}}\frac {\Lambda_K(\varpi)\psi(\varpi)g_{K,j}(r,\varpi)}{N(\varpi)^s}.
\end{align}

   In this section, we establish certain analytic properties of $h(r,s;\psi)$ needed in this paper. To this end, we first quote the following result of
C. David and A. M. G\"{u}lo\u{g}lu in \cite[Theorem 4.4]{DG22}.
\begin{lemma}
\label{lemg3} Let $\psi$ be any ray class character $\pmod 9$ and let $r$ be any primary element in $K$.
We have for $x>1$ and any $\varepsilon>0$,
\begin{align}
\label{glambdabound}
\begin{split}
   \sum_{\substack {\varpi \odd \\ (\varpi, r)=1 \\ N(\varpi) \leq x}} \frac {\Lambda_K(\varpi)\psi(\varpi) g_{K,j}(r, \varpi) }{\sqrt{N(\varpi)}}  \ll & (xN(r))^{\varepsilon}\min (x, N(r)^{1/12}x^{5/6}).
\end{split}
\end{align}
\end{lemma}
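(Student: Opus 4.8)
The statement is \cite[Theorem 4.4]{DG22}, so I only sketch the strategy one would follow to establish it. The plan is to convert the sum over prime moduli $\varpi$ into a sum of the normalized cubic Gauss sum $g_{K,j}(\varpi)/\sqrt{N(\varpi)}$ twisted by a fixed Hecke character, then remove the von Mangoldt weight by a combinatorial identity, and finally estimate the resulting linear and bilinear pieces using the analytic continuation of the Patterson--Kubota Dirichlet series of cubic Gauss sums together with a large sieve inequality for cubic characters over $K$.

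First, for $(\varpi,r)=1$ one has $g_{K,j}(r,\varpi)=\overline{\chi_{j,\varpi}}(r)\,g_{K,j}(\varpi)$ by the twisted multiplicativity \eqref{2.7}, and $|g_{K,j}(\varpi)|=\sqrt{N(\varpi)}$ for prime $\varpi$. Applying the cubic reciprocity law and its supplements (in the form of \eqref{cubicsupplyment} and the analogous relation at $1-\omega$), one rewrites $\overline{\chi_{j,\varpi}}(r)$ as $\overline{\chi^{(r)}_j(\varpi)}$ up to a character ramified only at the primes dividing $9r$. Hence the left side of \eqref{glambdabound} becomes a sum over primary primes $\varpi$ with $N(\varpi)\le x$ of $\Lambda_K(\varpi)\,\psi'(\varpi)\,g_{K,j}(\varpi)/\sqrt{N(\varpi)}$ for a fixed Hecke character $\psi'$ modulo $9r$, with the trivial bound $\ll x$ coming from $|g_{K,j}(\varpi)/\sqrt{N(\varpi)}|=1$; this already yields the first term of the minimum.

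Next, I would introduce the Dirichlet series $\sum_{n \odd} b(n)\,\psi'(n)\,g_{K,j}(n)\,N(n)^{-s-1/2}$, where $b(n)$ is the standard multiplicative weight repairing the non-squarefree terms. By Patterson's theory of the cubic metaplectic theta function over $K$, this series has meromorphic continuation past its abscissa of absolute convergence, with at most a simple pole at $s=5/6$ and polynomial growth on vertical lines, the dependence on the conductor $N(r)$ being controlled by bounds for the Fourier coefficients of the theta function. Removing $\Lambda_K$ by a Vaughan-type (or Heath--Brown) identity decomposes the sum into $O((\log x)^{A})$ pieces of Type I shape $\sum_{m}c_m\sum_{n}\psi'(mn)g_{K,j}(mn)/\sqrt{N(mn)}$ with $m$ confined to a short range, and Type II bilinear shape $\sum_m\sum_n\alpha_m\beta_n\,\psi'(mn)g_{K,j}(mn)/\sqrt{N(mn)}$. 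The Type I pieces are handled by Mellin inversion against the above Dirichlet series, the pole at $s=5/6$ producing the $x^{5/6}$; for the Type II pieces one applies Cauchy--Schwarz in one variable, opens the square, and bounds the inner sum over the other variable by the large sieve for cubic characters over $\mathcal O_K$ combined with the continuation just described.

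The main obstacle is the Type II estimate together with the optimization of the ranges of $m$ and $n$ against $x$ and $N(r)$: one must exhibit cancellation in sums of products $g_{K,j}(n_1 n)\,\overline{g_{K,j}(n_2 n)}$ uniformly in the conductor, which is exactly where the delicate arithmetic of cubic Gauss sums (their failure of multiplicativity, the repair weight $b$, and the location of the pole of the theta Dirichlet series) is essential. Balancing these contributions yields a bound of the shape $N(r)^{1/12}x^{5/6}$ up to $(xN(r))^{\varepsilon}$, which together with the trivial bound $x$ from the second paragraph gives \eqref{glambdabound}.
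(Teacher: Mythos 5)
Your opening misidentifies the statement: the lemma is \emph{not} \cite[Theorem 4.4]{DG22} verbatim. That theorem bounds the left-hand side of \eqref{glambdabound} (in the range where it is non-trivial) by a sum of four terms,
\[
x^{\varepsilon}\bigl(N(r)^{1/4+\varepsilon}x^{1/2}+N(r)^{1/6+\varepsilon}x^{2/3}+N(r)^{1/10+\varepsilon}x^{4/5}+N(r)^{1/12+\varepsilon}x^{5/6}\bigr),
\]
whereas the lemma asserts the single-term bound $(xN(r))^{\varepsilon}\min\bigl(x,\,N(r)^{1/12}x^{5/6}\bigr)$. The content of the lemma is precisely the reduction from the former to the latter, and that step is absent from your proposal. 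The paper supplies it in two lines: the trivial bound $\ll (xN(r))^{\varepsilon}x$, which you correctly identify via $|g_{K,j}(\varpi)|=\sqrt{N(\varpi)}$, already suffices when $N(r)\geq x^{2}$, since then $x\leq N(r)^{1/12}x^{5/6}$; and when $N(r)<x^{2}$ one checks that $N(r)^{1/4}x^{1/2}\leq N(r)^{1/6}x^{2/3}\leq N(r)^{1/10}x^{4/5}\leq N(r)^{1/12}x^{5/6}$, so the last of the four terms dominates and the DG22 bound collapses to $N(r)^{1/12}x^{5/6}$.

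Your sketch of the Patterson--Kubota theta Dirichlet series, the Vaughan/Heath--Brown decomposition into Type I and Type II sums, and the cubic large sieve is a reasonable description of the machinery used to prove \cite[Theorem 4.4]{DG22} itself, and the ingredients you name (twisted multiplicativity \eqref{2.7}, the pole at $s=5/6$, the repair weight) are the right ones. But this is a much longer road than the paper travels, since the paper invokes DG22 as a black box. Moreover, the assertion that ``balancing these contributions yields a bound of the shape $N(r)^{1/12}x^{5/6}$'' elides exactly the point at issue: the balancing in DG22 produces the four distinct exponent pairs above, not a single one, and one still needs the elementary comparison among them, restricted to the regime $N(r)<x^{2}$ where the non-trivial bound matters, to obtain the lemma's clean statement.
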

\begin{proof}
   We first note that using the estimation that $g_{K, j}(r,\varpi) \ll N(\varpi)^{1/2}$ for any $(r, \varpi)=1$ by \eqref{2.7} and \cite[Proposition 7.5]{Lemmermeyer} to see that upon summing trivially, the left-hand side expression in \eqref{glambdabound} is $\ll (xN(r))^{\varepsilon}x$. Notice further that $x \leq N(r)^{1/12}x^{5/6}$ while $N(r) \geq x^{2}$. We may thus assume that $x^2 > N(r)$, in which case we observe that in \cite[Theorem 4.4]{DG22}, it is shown that the left-hand side expression in \eqref{glambdabound} is bounded by
\begin{align*}
\begin{split}
  x^{\varepsilon}(N(r)^{1/4+\varepsilon}x^{1/2}+N(r)^{1/6+\varepsilon}x^{2/3}+N(r)^{1/10+\varepsilon}x^{4/5}+N(r)^{1/12+\varepsilon}x^{5/6}).
\end{split}
\end{align*}
  Now one checks that when $x^2 > N(r)$, we have
\begin{align*}
\begin{split}
  N(r)^{1/4}x^{1/2} \leq N(r)^{1/6}x^{2/3} \leq  N(r)^{1/10}x^{4/5} \leq N(r)^{1/12}x^{5/6}.
\end{split}
\end{align*}
   From which the estimation given in \eqref{glambdabound} follows. This completes the proof of Lemma \ref{lemg3}. 
\end{proof}
   With the aid of Lemma \ref{lemg3}, we now establish the following result concerning analytic properties of $h(r,s;\psi)$.
\begin{lemma}
\label{lemma:laundrylist}
  With the notation as above, let $h(r,s;\psi)$ be as in \eqref{h}.  For any fixed $r$, the function $h(r,s;\psi)$ is analytical in the region when $3/2-1/6<\Re(s) \leq 3/2$.
Moreover, in this region,  we have,
\begin{align}
\label{hbound}
 & h(r,s;\psi) \ll N(r)^{1/2(3/2-s)+\varepsilon}.
\end{align}
\end{lemma}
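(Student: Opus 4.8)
The plan is to express $h(r,s;\psi)$ as a Mellin integral against the partial sums controlled by Lemma~\ref{lemg3} and then deduce both analyticity and the bound \eqref{hbound} from that representation. First I would write, for $\Re(s)$ large,
\begin{align*}
  h(r,s;\psi)=\sum_{\substack{\varpi \odd \\ (\varpi,r)=1}}\frac{\Lambda_K(\varpi)\psi(\varpi)g_{K,j}(r,\varpi)}{N(\varpi)^{1/2}}\cdot\frac{1}{N(\varpi)^{s-1/2}},
\end{align*}
so that, with $A(x)$ denoting the left-hand side of \eqref{glambdabound}, partial summation (or a Mellin--Stieltjes transform) gives
\begin{align*}
  h(r,s;\psi)=(s-\tfrac12)\int\limits_1^{\infty}\frac{A(x)}{x^{s+1/2}}\,\dif x,
\end{align*}
valid initially for $\Re(s)>3/2$ where the integral converges absolutely. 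The key input is the bound $A(x)\ll (xN(r))^{\varepsilon}\min(x,N(r)^{1/12}x^{5/6})$ from Lemma~\ref{lemg3}: the factor $x^{5/6}$ controls the tail and forces convergence of the integral for all $\Re(s)>3/2-1/6+\varepsilon$, while the factor $x$ (used near $x=1$, or equivalently up to $x\asymp N(r)^{1/2}$) keeps the contribution near the lower endpoint harmless.

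Next I would argue that the integral $\int_1^{\infty}A(x)x^{-s-1/2}\,\dif x$ defines a holomorphic function of $s$ on the strip $3/2-1/6<\Re(s)\le 3/2$: the integrand is holomorphic in $s$ for each $x$, and the bound on $A(x)$ gives a locally uniform (in $s$) integrable majorant $\ll x^{\varepsilon}\min(x,N(r)^{1/12}x^{5/6})\,x^{-\Re(s)-1/2}$ on every compact subset of that strip, so differentiation under the integral sign / Morera's theorem applies. This simultaneously yields the analytic continuation claimed and sets up the size estimate. For the bound \eqref{hbound}, I would split the integral at the balance point $x_0=N(r)^{1/2}$, where the two terms in the minimum agree (indeed $x_0=N(r)^{1/12}x_0^{5/6}$). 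On $[1,x_0]$ use $A(x)\ll (xN(r))^{\varepsilon}x$ to get a contribution $\ll N(r)^{\varepsilon}\int_1^{x_0}x^{-\Re(s)+1/2}\,\dif x \ll N(r)^{\varepsilon}x_0^{3/2-\Re(s)} = N(r)^{(1/2)(3/2-\Re(s))+\varepsilon}$, using $\Re(s)>3/2-1/6$ so the exponent $3/2-\Re(s)>0$ and the integral is dominated by its upper endpoint. On $[x_0,\infty)$ use $A(x)\ll (xN(r))^{\varepsilon}N(r)^{1/12}x^{5/6}$ to get $\ll N(r)^{1/12+\varepsilon}\int_{x_0}^{\infty}x^{5/6-\Re(s)-1/2}\,\dif x = N(r)^{1/12+\varepsilon}\int_{x_0}^{\infty}x^{1/3-\Re(s)}\,\dif x$, which converges since $\Re(s)>3/2-1/6>4/3$, and equals $\ll N(r)^{1/12+\varepsilon}x_0^{4/3-\Re(s)} = N(r)^{1/12+(1/2)(4/3-\Re(s))+\varepsilon} = N(r)^{(1/2)(3/2-\Re(s))+\varepsilon}$. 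Both pieces match the target, and the harmless prefactor $|s-\tfrac12|$ is absorbed into the implied constant (or, if one wants uniformity in $\Im(s)$, one notes it is $O((1+|s|))$, which on the bounded strip is itself $O(N(r)^{\varepsilon})$ after the customary convention, or is simply left explicit).

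The main obstacle I anticipate is bookkeeping rather than conceptual: one must be careful that the exponent bookkeeping at the split point genuinely produces the clean exponent $\tfrac12(3/2-\Re(s))$ from both ranges (this is exactly why $x_0=N(r)^{1/2}$ is the right cutoff, and it hinges on the $1/12$ and $5/6$ in Lemma~\ref{lemg3} being "in balance" with the $1/2$-power normalization), and that the lower range really is controlled by the trivial bound $A(x)\ll (xN(r))^{\varepsilon}x$ valid for all $x>1$ — this is the $x\le N(r)^{1/12}x^{5/6}$ regime noted in the proof of Lemma~\ref{lemg3}. A secondary point to handle cleanly is the passage from the Dirichlet series (absolutely convergent only for $\Re(s)>3/2$, where the Euler-product/von Mangoldt manipulations are legitimate) to the integral representation, and then the observation that the right-hand side is manifestly holomorphic on the larger strip; this is the standard Landau-type continuation argument and requires only that $A(x)$ be, say, locally bounded and of the stated polynomial growth, both of which are immediate. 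One should also remark that on the vertical line $\Re(s)=3/2$ the estimate degenerates to $h(r,3/2;\psi)\ll N(r)^{\varepsilon}$, consistent with \eqref{hbound}, and that no pole arises at $s=3/2$ precisely because the $n=\varpi^k$ with $k\ge 2$ contributions are negligible and the $k=1$ term carries a genuine square-root cancellation encoded in Lemma~\ref{lemg3}.
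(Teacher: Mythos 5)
Your proposal is correct and takes essentially the same route as the paper: the paper also splits at the cutoff $N(\varpi) = N(r)^{1/2}$, bounds the low range trivially via $g_{K,j}(r,\varpi)\ll N(\varpi)^{1/2}$, and treats the tail by partial summation against the bound from Lemma~\ref{lemg3}, producing the same $N(r)^{\frac12(3/2-\Re(s))+\varepsilon}$ from both pieces. Your version merely formalizes the partial summation as a Mellin--Stieltjes integral and spells out the Morera-type justification of analyticity that the paper leaves implicit.
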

\begin{proof}
  We recast the function $h(r,s;\psi)$ in \eqref{h} as
\begin{align}
\label{hdecomp}
   h(r,s;\psi)=\sum_{\substack{\varpi \text{ primary} \\ (\varpi,r)=1 \\ N(\varpi) \leq N(r)^{1/2} }}\frac {\Lambda_K(\varpi)\psi(\varpi)g_{K,j}(r,\varpi)}{N(\varpi)^s}+\sum_{\substack{\varpi \text{ primary} \\ (\varpi,r)=1 \\ N(\varpi) > N(r)^{1/2} }}\frac {\Lambda_K(\varpi)\psi(\varpi)g_{K,j}(r,\varpi)}{N(\varpi)^s}.
\end{align}
   We bound the first sum above trivially by
\begin{align}
\label{firstsum}
   \ll N(r)^{1/2(3/2-s)+\varepsilon}, \; \mbox{for} \; \Re(s) \leq 3/2,
\end{align}
using the estimation $g_{K,j}(r,\varpi) \ll N(\varpi)^{1/2}$ again. \newline

  We next estimate the second sum on the right-hand side of \eqref{hdecomp}  using \eqref{glambdabound} and partial summation to see that we get that for $\Re(s) > 3/2-1/6$, the sum in question is
\begin{align}
\label{secondsum}
   \ll N(r)^{1/12}N(r)^{1/2(3/2-1/6-s)+\varepsilon}.
\end{align}
 The estimation given in \eqref{hbound} now follows from \eqref{hdecomp}--\eqref{secondsum}. This completes the proof.
\end{proof}

\subsection{Some results on multivariable complex functions} Here, we quote some results from multivariable complex analysis. We begin with the notation of a tube domain.
\begin{defin}
		An open set $T\subset\mc^n$ is a tube if there is an open set $U\subset\mr^n$ such that $T=\{z\in\mc^n:\ \Re(z)\in U\}.$
\end{defin}
	
   For a set $U\subset\mr^n$, we define $T(U)=U+i\mr^n\subset \mc^n$.  We shall make use of the following Bochner's Tube Theorem \cite{Boc}.
\begin{theorem}
\label{Bochner}
		Let $U\subset\mr^n$ be a connected open set and $f(z)$ a function holomorphic on $T(U)$. Then $f(z)$ has a holomorphic continuation to the convex hull of $T(U)$.
\end{theorem}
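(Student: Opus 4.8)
## Proof proposal for Bochner's Tube Theorem

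The plan is to follow the classical route: reduce the problem to the one–dimensional edge-of-the-wedge / convexity argument via Fourier–Laplace analysis, or alternatively give the direct geometric argument that the holomorphic extension is ``free'' once one knows that convexity is the only obstruction. I would present the streamlined version using the maximum principle along analytic discs, which avoids distribution theory. First I would recall that it suffices to show the following local statement: if $a, b \in U$ then $f$ extends holomorphically to a neighbourhood of the segment $T([a,b]) = \{z : \Re(z) \in [a,b]\}$; iterating and using that $U$ is connected (hence polygonally connected) then gives the extension to $T(\mathrm{conv}(U))$, and a Vitali/Osgood-type argument patches the local extensions into a single-valued holomorphic function on the convex hull (single-valuedness is automatic because $T(\mathrm{conv}(U))$ is simply connected, being convex).

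The core step is therefore the two-point case. Here I would fix $a,b \in U$, pick a small $r>0$ so that the polydiscs of polyradius $r$ around $a$ and around $b$ (in the $\Re$-directions, times all of $i\mr^n$ in the imaginary directions) lie in $T(U)$, and consider, for each fixed imaginary part $y \in \mr^n$ and each direction, the family of analytic discs
\begin{align*}
  \phi_\theta(\zeta) = \text{(affine map sending the unit disc into } \mc^n\text{)},
\end{align*}
whose boundary circles lie in the region where $f$ is already known to be holomorphic, but whose interiors sweep out points over $\mathrm{conv}(\{a,b\})$. Applying the Cauchy integral formula on these discs defines a candidate extension $F(z)$, and one checks it is holomorphic (differentiate under the integral sign, Morera in each variable) and agrees with $f$ where both are defined (again by the Cauchy formula on $T(U)$ itself). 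The key analytic input is Hartogs's theorem on separate holomorphy, or equivalently the fact that a continuous function holomorphic in each variable separately is holomorphic — this is what lets one build the multivariable extension out of the one-variable disc arguments.

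The main obstacle, and the step I would be most careful about, is verifying that the disc-integral extension $F$ is well-defined and independent of the auxiliary choices (the radius $r$, the particular family of discs, the splitting of $U$ into a polygonal path): different choices must give the same holomorphic germ, and this is exactly where connectedness of $U$ and convexity of the target are used. A clean way to organize this is to first prove the theorem for $n = 1$ — where it is vacuous, since every connected open subset of $\mr$ is an interval, hence convex — and then to run an induction on $n$, at each stage freezing all but one real coordinate, applying the one-variable disc construction in the remaining plane, and invoking Hartogs separate-holomorphy to upgrade the resulting separately-holomorphic function to a genuinely holomorphic one. I would then remark that the single-valuedness and the agreement with $f$ on $T(U)$ follow from the identity theorem, since $T(U)$ is nonempty and open in the connected manifold $T(\mathrm{conv}(U))$.

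Since this is a standard result quoted from \cite{Boc}, in the paper itself I would simply cite Bochner's original paper (and perhaps H\"ormander's book for a modern treatment) rather than reproduce the argument; the sketch above is what one would write if a self-contained proof were required.
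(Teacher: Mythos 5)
The paper does not prove this theorem at all: it is stated as Theorem~\ref{Bochner} and attributed to \cite{Boc}, then invoked as a black box when continuing the multiple Dirichlet series $A_{K,j}(s,w,z)$ from $S_1\cup S_2$ to the convex hull $S_3$. Your closing remark — that one would simply cite Bochner (or H\"ormander) in a paper of this kind — is exactly what the authors do, so there is no ``paper proof'' to compare your sketch against.

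As a sketch the ideas are in the right family (analytic discs, continuity principle, Hartogs, iterated hulling), but several of the steps as you phrase them would not survive being written out. (1) Extending over $T([a,b])$ for $a,b\in U$, even after you let $a,b$ range over all of $U$, does not reach $T(\mathrm{conv}(U))$: the convex hull of a set in $\mr^n$ is the union of simplices with vertices in the set (Carath\'eodory), not the union of segments. You need the iterated construction $U_0=U$, $U_{k+1}=\bigcup_{a,b\in U_k}(a,b)$, together with a uniformity statement that lets you pass to the limit. (2) The ``freeze all but one real coordinate and apply the one-variable disc argument'' reduction does not work as described: fixing $\Re(z_2),\dots,\Re(z_n)$ slices $U$ into a (possibly disconnected) subset of $\mr$, and the slice of $\mathrm{conv}(U)$ at that level is generally strictly larger than the convex hull of the slice of $U$; so the $n=1$ triviality buys you nothing this way. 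The correct one-dimensional reduction in the classical proof is along a complex line transverse to the tube directions, with the disc boundaries hugging two sides of a triangle in $U$ while the interiors sweep the third side — this is precisely the content you've left as the placeholder $\phi_\theta(\zeta)=(\text{affine map})$, and it is where all the work is. (3) Invoking Hartogs' separate-holomorphy theorem requires the candidate extension to be separately holomorphic on an \emph{open} set, which is not evident from the disc construction until you've already established some openness of the extended domain. (4) ``Single-valuedness is automatic by simple connectivity'' is the monodromy theorem, which presupposes you have shown continuation exists along \emph{every} path in $T(\mathrm{conv}(U))$; that existence is the substance, not a bookkeeping afterthought. The clean proofs avoid the monodromy issue entirely by constructing the global extension directly (e.g.\ via a Cauchy integral whose kernel manifestly depends holomorphically on the new variable).

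None of this is a criticism of the decision to cite: for the application in this paper, quoting Bochner's theorem is the right move, and your instinct there matches the authors'.
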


 We denote the convex hull of an open set $T\subset\mc^n$ by $\widehat T$.  Our next result is \cite[Proposition C.5]{Cech1} on the modulus of holomorphic continuations of multivariable complex functions.
\begin{prop}
\label{Extending inequalities}
		Assume that $T\subset \mc^n$ is a tube domain, $g,h:T\rightarrow \mc$ are holomorphic functions, and let $\tilde g,\tilde h$ be their holomorphic continuations to $\widehat T$. If  $|g(z)|\leq |h(z)|$ for all $z\in T$ and $h(z)$ is nonzero in $T$, then also $|\tilde g(z)|\leq |\tilde h(z)|$ for all $z\in \widehat T$.
\end{prop}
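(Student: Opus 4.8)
\emph{Proof proposal.} The plan is to reduce the whole statement to a modulus bound for one auxiliary function and then invoke Bochner's Tube Theorem together with the identity principle. Set $f:=g/h$ on $T$; since $h$ is nowhere zero on $T$ this is holomorphic there, and $|f(z)|=|g(z)|/|h(z)|\le 1$ for all $z\in T$. Writing $T=T(U)$ with $U$ connected and open (as $T$ is a domain), Theorem~\ref{Bochner} supplies the holomorphic continuations $\tilde g,\tilde h$ of $g,h$ to $\widehat T$ as well as a holomorphic continuation $\tilde f$ of $f$ to $\widehat T$; note that $\widehat T$, being the convex hull of the open set $T$, is itself open and convex, hence connected. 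I claim it is enough to show that $|\tilde f(z)|\le 1$ for every $z\in\widehat T$. Granting this, observe that $\tilde f\tilde h$ and $\tilde g$ are holomorphic on $\widehat T$ and coincide on the open subset $T$ (there $\tilde f\tilde h=fh=g=\tilde g$), so by the identity theorem $\tilde g=\tilde f\tilde h$ on all of $\widehat T$; consequently $|\tilde g(z)|=|\tilde f(z)|\,|\tilde h(z)|\le|\tilde h(z)|$ for every $z\in\widehat T$, which is exactly the assertion. (This also forces $\tilde g(z)=0$ wherever $\tilde h(z)=0$, so no separate treatment of the zeros of $\tilde h$ is needed.)

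It remains to establish $|\tilde f|\le 1$ on $\widehat T$. Suppose, for contradiction, that $|\tilde f(z^*)|>1$ for some $z^*\in\widehat T$, and put $w^*:=\tilde f(z^*)$ and $w_0:=1/\overline{w^*}$, so that $|w_0|=|w^*|^{-1}<1$ and $\overline{w_0}\,w^*=1$. Consider the function $\phi:=(1-\overline{w_0}\,f)^{-1}$ on $T$. Since $|f|\le 1$ and $|w_0|<1$ we have $|\overline{w_0}\,f|\le|w_0|<1$, so $1-\overline{w_0}\,f$ never vanishes on $T$ and $\phi$ is holomorphic (indeed bounded) there. Applying Theorem~\ref{Bochner} once more, $\phi$ extends to a holomorphic function $\tilde\phi$ on $\widehat T$. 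The product $\tilde\phi\cdot(1-\overline{w_0}\,\tilde f)$ is holomorphic on the connected set $\widehat T$ and equals $\phi\cdot(1-\overline{w_0}\,f)\equiv 1$ on $T$, so by the identity theorem $\tilde\phi(z)\,(1-\overline{w_0}\,\tilde f(z))=1$ for all $z\in\widehat T$. Evaluating at $z^*$ gives $\tilde\phi(z^*)\,(1-\overline{w_0}\,w^*)=\tilde\phi(z^*)\cdot 0=1$, a contradiction. Hence $|\tilde f|\le 1$ throughout $\widehat T$, which finishes the argument.

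The two points that are not mere bookkeeping are: first, that the convex hull of a tube $T(U)$ is again a tube, equivalently that $\widehat T$ is connected, which is what makes the identity principle available on $\widehat T$; and second, the choice of the auxiliary function $\phi=(1-\overline{w_0}\,f)^{-1}$ tuned to $w^*$ — this is precisely the bounded holomorphic function on $T$ whose continuation would be forced to develop a pole at any point where $\tilde f$ exceeds $1$ in modulus, which Theorem~\ref{Bochner} prohibits. I expect this Blaschke-type test function to be the only genuinely clever step; everything else is a routine application of Bochner's theorem and of analytic continuation, and in particular no growth or boundedness estimates on $\tilde f,\tilde g,\tilde h$ themselves enter.
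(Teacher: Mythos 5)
Your proof is correct, and it follows what is essentially the only natural line of attack for this statement. The paper itself gives no proof here: Proposition \ref{Extending inequalities} is simply quoted from \cite{Cech1} (Proposition C.5 there), so there is nothing in this manuscript to compare against. The argument in \cite{Cech1} is, up to bookkeeping, the same as yours: instead of first forming $f=g/h$ and then tuning a Blaschke factor $\phi=(1-\overline{w_0}f)^{-1}$ to a hypothetical bad point $z^*$, one observes directly that for every $\lambda$ with $|\lambda|<1$ the function $h-\lambda g$ is zero-free on $T$ (since $|h-\lambda g|\ge(1-|\lambda|)|h|>0$), so $1/(h-\lambda g)$ is holomorphic on $T$, extends to $\widehat T$ by Bochner, and by the identity theorem its product with $\tilde h-\lambda\tilde g$ is identically $1$; hence $\tilde h-\lambda\tilde g$ has no zeros on $\widehat T$, and letting $\lambda$ range over the open unit disc gives $|\tilde g|\le|\tilde h|$ pointwise. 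Your $\phi$ is precisely $h/(h-\lambda g)$ with $\lambda=\overline{w_0}$, so the two proofs use the same auxiliary function; yours just fixes the value of $\lambda$ via a contradiction argument rather than quantifying over all $|\lambda|<1$. One small remark: you do not actually need to assert that $\phi$ is bounded on $T$ (Bochner's theorem requires only holomorphy on the tube), though noting it does no harm. All the peripheral points you flag are also fine: the convex hull of a tube $T(U)$ equals $T(\widehat U)$ and is therefore an open, convex, connected tube, so the identity theorem applies on $\widehat T$; and your observation that $\tilde g=\tilde f\,\tilde h$ on $\widehat T$ (rather than just $\tilde f=\tilde g/\tilde h$ where $\tilde h\ne 0$) cleanly disposes of possible zeros of $\tilde h$ in $\widehat T\setminus T$.
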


\section{Proof of Theorem \ref{Theorem for all characters}}
\label{sec: mainthm}

\subsection{Setup}

  We define for $\Re(s), \Re(w)$ and $\Re(z)$ large enough,
\begin{align}
 \label{Aswzexp}
\begin{split}
A_{K,j}(s,w,z)=& \sumstar_{\varpi}\frac{\Lambda_{K}(\varpi) L(w,  \chi_{j, \varpi})}{N(\varpi)^sL(z,  \chi_{j, \varpi})}=\sumstar_{ n }\frac{\Lambda_{K}(n) L(w, \chi_{j, n})}{N(n)^sL(z,  \chi_{j, n})}-\sum_{l \geq 2}\sumstar_{\substack{\varpi^l \\ \varpi \odd}}\frac{\Lambda_{K}(\varpi^l) L(w, \chi_{j,\varpi^l})}{N(\varpi)^{ls}L(z,  \chi_{j, \varpi^l})} \\
:= & A_{K,j,1}(s,w,z)-A_{K,j,2}(s,w,z).
\end{split}
\end{align}

The Mellin inversion yields that
\begin{equation}
\label{Integral for all characters}
	\sumstar_{\substack{\varpi}}\frac {\Lambda_{K}(\varpi)L(\tfrac 12+\alpha, \chi_{j, \varpi})}{L(\tfrac 12+\beta, \chi_{j, \varpi})}w \bfrac {N(\varpi)}X=\frac1{2\pi i}\int\limits_{(c)}A_{K,j}\lz s,\tfrac12+\alpha, \tfrac12+\beta\pz X^s\widehat w(s) \dif s,
\end{equation}
where that the Mellin transform $\widehat{w}$ of $w$ is given by
\begin{align*}
     \widehat{w}(s) =\int\limits^{\infty}_0w(t)t^s\frac {\dif t}{t}.
\end{align*}

\subsection{Analytical properties of $A_{K,j}(s,w,z)$}
\label{sec: Aproperty}

  First note that we have
\begin{align}
\label{Aswmain-0}
\begin{split}
 & A_{K,j,1}(s,w,z)=\sumstar_{ n }\frac{\Lambda_{K}(n) L(w, \chi_{j, n})}{N(n)^sL(z, \chi_{j, n})}=\sumstar_{ n } \sum_{\substack{(m), (k)}}\frac{\Lambda_{K}(n) \mu_K(k)\chi^{(mk)}_j(n)}{N(m)^wN(k)^zN(n)^s}.
\end{split}
\end{align}
  We write $m=(1-\omega)^{r_1}m', k=(1-\omega)^{r_2}k'$ with $r_1, r_2 \geq 0$, $m', k'$ primary and further replacing $m', k'$ by $m, k$.   By \eqref{cubicsupplyment}, we get that
\begin{align}
\label{Aswmainsimplified}
\begin{split}
 & \sumstar_{ n } \sum_{\substack{(m), (k)}}\frac{\Lambda_{K}(n) \mu_K(k)\chi^{(mk)}_j(n)}{N(m)^wN(k)^zN(n)^s}=\sum_{\substack{r_1,r_2 \geq 0  }}\frac{\mu_K((1-\omega)^{r_2})}{3^{r_1w}3^{r_2z}}\sum_{\substack{m,k \odd }}\frac{\mu_K(k)}{N(m)^wN(k)^z}\sumstar_{ n }\frac{\Lambda_{K}(n)\chi^{(mk)}_j(n)}{N(n)^s}.
\end{split}
\end{align}

We further apply the ray class characters to detect the condition that $n \equiv 1 \pmod {9}$ to recast the right-hand side expression above as
\begin{align}
\label{Aswmain-1}
\begin{split}
\frac{1}{\# h_{(9)}} \sum_{\substack{r_1,r_2 \geq 0  }} & \frac{\mu_K((1-\omega)^{r_2})}{3^{r_1w}3^{r_2z}} \sum_{\psi \bmod {9}}\sum_{\substack{m,k \\ \odd }}\frac{\mu_K(k)}{N(m)^wN(k)^z}\sum_{ n \odd }\frac{\Lambda_{K}(n)\psi(n)\chi^{(mk)}_j(n)}{N(n)^s} \\
=&  - \frac{1}{\# h_{(9)}} \sum_{\substack{r_1,r_2 \geq 0  }}\frac{\mu_K((1-\omega)^{r_2})}{3^{r_1w}3^{r_2z}}\sum_{\psi \bmod {9}}\sum_{\substack{m,k \\ \odd }}\frac{\mu_K(k)}{N(m)^wN(k)^z} \frac {L'(s, \psi \chi^{(mk)})}{L(s, \psi \chi^{(mk)})}.
\end{split}
\end{align}

 Note $A_{K,j,2}(s,w,z)$ is easily seen, using \eqref{Lboundgen1}, to converge when $\Re(s)>1/2$, $\Re(w) \geq 1/2$, $\Re(z)>1/2$ under GRH. We deduce from this, \eqref{Lderboundgen} and \eqref{Aswmain-1} that under GRH, save for a simple pole at $s=1$ when $mk$ is a perfect cube and $\psi$ is the principal character, $A_{K,j}(s,w,z)$ is convergent in the region when $\Re(s)>1/2$, $\Re(w)>1$, $\Re(z)>1$.  On the other hand, we observe that the sum over $\varpi$ in \eqref{Aswmain-0} is convergent when $\Re(s)>1$, $\Re(w)\geq 1/2$, $\Re(z)>1/2$ under GRH by \eqref{PgLest2} and \eqref{Lboundgen}. It therefore follows the above and Theorem \ref{Bochner} that the left-hand side expression in \eqref{Aswmain-0} is convergent in the region
\begin{equation}
\label{S1}
		S_{1} :=\{(s,w,z): \ \Re(s)> \tfrac 12, \ \Re(w) \geq \tfrac 12, \ \Re(z)>\tfrac 12, \ \Re(s+w)> \tfrac 32, \ \Re(s+z)> \tfrac 32 \}.
\end{equation}

   In the sequal, we shall define similar regions $S_j, S_{i, j}$ and we adopt the convention that for any real number $\delta$,
\begin{align*}
\begin{split}
 S_{j,\delta} :=& \{ (s,w,z)+\delta (1,1,1) : (s,w,z) \in S_j,  \ \Re(w) \leq 1 \}, \\
S_{i, j,\delta} :=& \{ (s,w,z)+\delta (1,1,1) : (s,w,z) \in S_{i,j}, \ \Re(w) \leq 1 \} .
\end{split}
\end{align*}

  We also deduce from \eqref{Lderboundgen}, \eqref{PgLest2}, \eqref{Lboundgen} and apply Proposition \ref{Extending inequalities} to see that under GRH, in the region $S_{1,\varepsilon}$, we have for any $\varepsilon>0$,
\begin{align}
\label{Aswboundwlarge6}
\begin{split}
 \Big|(s-1)A_{K,j}(s,w,z) \Big | \ll & (1+|s|)^{1+\varepsilon}(1+|w|)^{\varepsilon}|z|^{\varepsilon}.
\end{split}
\end{align}

  Lastly, we apply the functional equation \eqref{fneqnL} of $L(w, \chi_{j, \varpi})$ to deduce from \eqref{Aswzexp} that
\begin{align*}
\begin{split}
A_{K,j}(s,w, z)= \sumstar_{\varpi}\frac{\Lambda_{K}(\varpi) L(w,  \chi_{j, \varpi})}{N(\varpi)^sL(z,  \chi_{j, \varpi})} = |D_K|^{1/2-w}(2\pi)^{2w-1}\frac {\Gamma(1-w)}{\Gamma (w)} \sumstar_{\varpi} \frac{\Lambda_{K}(\varpi)g_{K,j}(\varpi) L(1-w,  \overline \chi_{j, \varpi})}{N(\varpi)^{s+w}L(z,  \chi_{j, \varpi})}.
\end{split}
\end{align*}

   Note that by \eqref{2.7},
\begin{align*}
\begin{split}
 \sumstar_{\varpi} & \frac{\Lambda_{K}(\varpi)g_{K,j}(\varpi) L(1-w,  \overline \chi_{j, \varpi})}{N(\varpi)^{s+w}L(z,  \chi_{j, \varpi})} = \sumstar_{\varpi} \sum_{\substack{ m,k \\ \odd}} \frac{\Lambda_{K}(\varpi)\mu_K(k)g_{K,j}(\varpi) \chi_{j, \varpi}(k) \overline \chi_{j, \varpi}(m)}{N(\varpi)^{s+w}N(m)^{1-w}N(k)^{z}} \\
& \hspace*{1.5cm} = \sum_{\substack{ m,k \\ \odd}} \frac {\mu_K(k)}{N(m)^{1-w}N(k)^{z}} \sumstar_{\substack{\varpi \odd \\ (\varpi, mk)=1}}  \frac{\Lambda_{K}(\varpi)g_{K,j}(\varpi) \overline \chi_{j, \varpi}(mk^2)}{N(\varpi)^{s+w}} \\
& \hspace*{1.5cm} = \sum_{\substack{ m,k \\ \odd}} \frac {\mu_K(k)}{N(m)^{1-w}N(k)^{z}} \frac{1}{\# h_{(9)}} \sum_{\psi \bmod {9}} \ \sum_{\substack{\varpi \odd \\ (\varpi, mk)=1}}   \frac{\Lambda_{K}(\varpi)\psi(\varpi)g_{K,j}(mk^2, \varpi) }{N(\varpi)^{s+w}}.
\end{split}
\end{align*}

   We now apply Lemma \ref{lemma:laundrylist} to the inner sum in the last expression above to deduce that
the last triple sum above and hence $A_{K,j}(s,w, z)$ is convergent in the region
\begin{equation}
\label{S2}
		S_{2} :=\{(s,w,z): \ \tfrac 32-\tfrac 16< \Re(s+w) \leq \tfrac 32, \ \Re(1-w+\tfrac 12(s+w)) >\tfrac 74, \ \Re(z+s+w)>\tfrac 52 \}.
\end{equation}

 Moreover, in the region $S_{2, \varepsilon}$, we have by \eqref{Stirlingratio} that
\begin{align}
\label{Aswboundwlarge7}
\begin{split}
 \Big|A_{K,j}(s,w, z) \Big | \ll  (1+|w|)^{1-2\Re(w)+\varepsilon}.
\end{split}
\end{align}

  Note that the union of $S_1$ and $S_2$ is connected and we determine the convex hull of $S_1$ and $S_2$ by rewriting the condition $\Re(z+s+w)>5/2$ as $\Re(s+w)> 5/2-\Re(z)$ to note that $5/2-\Re(z) \geq 3/2- 1/6$ when $\Re(z) \leq 7/6$. We now recast the condition $\Re(1-w+(s+w)/2) >7/4$ as $\Re(s-w) > 3/2$ to note that the intersection of the three planes $\Re(s+w)=5/2-\Re(z)$, $\Re(s-w) = 3/2$, $\Re(z)= 1/2$ is at $(7/4, 1/4, 1/2)$ while the intersection of the three planes $\Re(s+w)=5/2-\Re(z), \Re(s-w) =3/2$, $\Re(z)=7/6$ is at $(17/12, -1/12 , 7/6)$. These two points are on the boundary of $S_2$. As the point $(1, 1/2, 1/2)$ is on the boundary of $S_1$, it together with the two points $(7/4, 1/4 , 1/2),  (17/12, -1/12 , 7/6)$
determines a plane whose equation is given by $\Re(s+3w+2z)=7/2$. When $\Re(z) \geq 7/6$, we notice that the line determined by the intersection of the two planes $\Re(s+w)= 3/2$, $\Re(s+z) = 3/2$ meets the plane $\Re(w)=1/2$ at the point $(1, 1/2, 1/2)$ and meets the plane $\Re(s)=1/2$
at the point $(1/2, 1, 1)$. These two points together with the point $(17/12, -1/12, 7/6)$ determines a plane whose equation is given by $\Re(15s+13w+2z)=45/2$. Lastly, the points $(17/12, -1/12 , 7/6)$, $(1/2, 1, 1)$ determine a plane that is parallel to the $z$-axis, whose equation is given by $\Re(w+13/11(s-1/2))=1$. It follows that the convex hull of $S_1$ and $S_2$ equals
\begin{align}
\label{S3}
\begin{split}
		S_3 :=& \{(s,w, z): \ \Re(s)> \tfrac 12, \ \Re(z)>\tfrac 12, \ \Re(s+z)> \tfrac 32, \ \Re(s+w)> \tfrac 32-\tfrac 16,  \\
& \hspace{2cm}  \ \Re(s+3w+2z)> \tfrac 72, \ \Re(15s+13w+2z)> \tfrac {45}{2}, \ \Re(w+\tfrac {13}{11}(s-\tfrac 12))>1 \}.
\end{split}
\end{align}
  It follows from Theorem \ref{Bochner} that $A_{K,j}(s,w,z)$ converges absolutely in the region $S_3$. Furthermore, we deduce from Proposition~\ref{Extending inequalities} the bound, inherited from \eqref{Aswboundwlarge6} and \eqref{Aswboundwlarge7}, that in the region $S_{3,\varepsilon} \cap \{ (s,w, z)| \Re(w) > \frac 12-\frac 1{11} \}$, we have
\begin{align}
\label{Aswboundslarge2}
\begin{split}
 |(s-1)A_{K,j}(s,w,z)| \ll &
\begin{cases}
(1+|s|)^{1+\varepsilon}(1+|w|)^{2/11+\varepsilon}|z|^{\varepsilon}, & 1/2-  1/11< \Re(w) < 1/2, \\
(1+|s|)^{1+\varepsilon}(1+|w|)^{\varepsilon}|z|^{\varepsilon}, &  \Re(w) \geq 1/2.
\end{cases}
\end{split}
\end{align}

\subsection{Residue}
\label{sec:resA}

  We see from \eqref{Aswmainsimplified} that $A_{K,j}(s,w,z)$ has a simple pole at $s=1$ arising from the terms with $mk$ being perfect cubes and $\psi$ being the principal character.  As the residue of $-\zeta'_K(s)/\zeta_K(s)$ at $s=1$ equals $1$, the residue at $s=1$ of the left-hand side expression in \eqref{Aswmainsimplified} equals
\begin{align}
\label{Ress=1}
\begin{split}
 \frac{1}{\# h_{(9)}} & \sum_{\substack{r_1,r_2 \geq 0  }}\frac{\mu_K((1-\omega)^{r_2})}{3^{r_1w}3^{r_2z}}\sum_{\substack{m,k \odd \\ mk=\text{a perfect cube} }}\frac{\mu_K(k)}{N(m)^wN(k)^z} \\
 =& \frac{1}{\# h_{(9)}} \frac{1-3^{-z}}{1-3^{-w}} \prod_{(\varpi, j)=1}\Big(1+(1-\frac 1{N(\varpi)^{z-w}})\frac 1{N(\varpi)^{3w}}(1-\frac 1{N(\varpi)^{3w}} )^{-1}\Big ) = \frac{1}{\# h_{(9)}}  \frac{1-3^{-z}}{1-3^{-w}} \cdot \frac {\zeta^{(j)}_K(3w)}{\zeta^{(j)}_K(2w+z)}.
\end{split}
\end{align}

\subsection{Completion of the proof}
\label{sec: conclusion}

 We shift the line of integration in \eqref{Integral for all characters} to $\Re(s)=E(\alpha, \beta)+\varepsilon$.  The integral on the new line can be absorbed into the $O$-term in \eqref{Asymptotic for ratios of all characters} upon using \eqref{Aswboundslarge2} and the observation that repeated integration by parts gives that, for any integer $E \geq 0$,
\begin{align*}
 \widehat w(s)  \ll  \frac{1}{(1+|s|)^{E}}.
\end{align*}
   We also encounter a simple pole at $s=1$ in this process with the corresponding residue given in \eqref{Ress=1}.  Direct computations now lead to the main terms given in \eqref{Asymptotic for ratios of all characters}. This completes the proof of Theorem \ref{Theorem for all characters}.

\section{Proof of Theorem \ref{Theorem for log derivatives}}
\label{sec: logder}

	We first recast the expression in \eqref{Asymptotic for ratios of all characters} as
\begin{align}
\label{Asymptoticshortversion}
\begin{split}	
& \sumstar_{\substack{\varpi}}\frac {\Lambda_{K}(\varpi)L(\tfrac 12+\alpha, \chi_{j, \varpi})}{L(\tfrac 12+\beta, \chi_{j, \varpi})}w \bfrac {N(\varpi)}X =   XM(\alpha,\beta)+R(X,\alpha,\beta),
\end{split}
\end{align}
   where $R(X,\alpha,\beta)$ is the error term in \eqref{Asymptotic for ratios of all characters} and we define
\begin{align*}
\begin{split}
	M(\alpha,\beta)=& \frac {\M w(1)}{\# h_{(9)}} \frac{1-3^{-1/2-\beta}}{1-3^{-1/2-\alpha}}
\frac {\zeta^{(j)}_K(\tfrac{3}{2}+3\alpha)}{\zeta^{(j)}_K( \tfrac{3}{2}+2\alpha+\beta)} .
\end{split}
\end{align*}
	Note that the expression on the left-hand side of \eqref{Asymptoticshortversion} and $M(\alpha,\beta)$ are analytic functions of $\alpha,\beta$, so is $R(X,\alpha,\beta)$. \newline
	
	We now differentiate the above terms with respect to $\alpha$ for a fixed $\beta=r$ with $\varepsilon <\Re(r) <1/2$, and then set $\alpha=\beta=r$.  We get that
\begin{align}
\label{M1der}
\begin{split}
			\frac{\dif}{\dif \alpha} XM(\alpha,\beta)\Big\vert_{\alpha=\beta=r}
			&=\frac {\M w(1)X}{\# h_{(9)}}\lz\frac{(\zeta^{(j)}_K(\frac 32+3r))'}{\zeta^{(j)}_K(\frac 32+3r)}-\frac{\log 3 }{3^{1/2+r}-1}\pz.
\end{split}
\end{align}

	Next, as $R(X,\alpha,\beta)$ is analytic in $\alpha$, Cauchy's integral formula leads to
\begin{align*}
\begin{split}
		\frac{\dif}{\dif \alpha}R(X,\alpha,\beta)=\frac{1}{2\pi i}\int\limits_{C_\alpha}\frac{R(X,z,\beta)}{(z-\alpha)^2} \dif z,
\end{split}
\end{align*}
   where $C_\alpha$ is a circle centered at $\alpha$ of radius $\rho$ with $\varepsilon/2<\rho<\varepsilon$. It follows that
\begin{align*}
\begin{split}
		\lab\frac{\dif }{\dif \alpha}R(X,\alpha,\beta)\rab\ll \frac1{\rho}\cdot\max_{z\in C_\alpha} |R(X,z,\beta)|\ll (1+|\alpha|)^{\varepsilon}(1+|\beta|)^{\varepsilon}X^{E(\alpha,\beta)+\varepsilon}.
\end{split}
\end{align*}

    We now set $\alpha=\beta=r$ to deduce from \eqref{Nab} and the above that
\begin{align}
\label{Rder1}
\begin{split}
		\lab\frac{\dif}{\dif \alpha}R(X,\alpha,\beta)\rab\ll (1+|r|)^{\varepsilon}X^{1-\Re(r)+\varepsilon}.
\end{split}
\end{align}
	
   We conclude from \eqref{Asymptoticshortversion}--\eqref{Rder1} that \eqref{Sum of L'/L with removed 2-factors} holds. This completes the proof of Theorem \ref{Theorem for log derivatives}.
	
\section{Proof of Theorem \ref{Theorem one-level density}}
\label{Section one-level density}
	
   We apply the residue theorem to see that
\begin{align}
\label{sumoverzeros}
\begin{split}
		\sum_{\gamma_n}h\bfrac{\gamma_{\varpi,n}\log X}{2\pi}=\frac{1}{2\pi i}\lz \ \int\limits_{(2)}-\int\limits_{(-1)} \ \pz h\lz\frac{\log X}{2\pi i}\left( s- \frac{1}{2} \right) \pz\frac {\Lambda'}{\Lambda}(s, \chi_{j, \varpi}) \dif s.
\end{split}
\end{align}

   We then deduce from the functional equation  \eqref{fneqn} that
\begin{align}
\label{Lambdarel}
\begin{split}
		\frac {\Lambda'}{\Lambda}(s, \chi_{j, \varpi})=-\frac {\Lambda'}{\Lambda}(1-s, \overline \chi_{j, \varpi}).
\end{split}
\end{align}

   It follows from \eqref{Lambda}, \eqref{sumoverzeros} and \eqref{Lambdarel} that
\begin{align*}
\begin{split}
		\sum_{\gamma_n}h\bfrac{\gamma_n\log X}{2\pi}=&\frac{1}{2\pi i}\int\limits_{(2)}h\lz\frac{\log X}{2\pi i} \left( s- \frac{1}{2} \right)  \pz \lz \frac {\Lambda'}{\Lambda}(s, \chi_{j, \varpi})+\frac {\Lambda'}{\Lambda}(s, \overline \chi_{j, \varpi}) \pz \dif s \\
=& \frac{1}{2\pi i}\int\limits_{(2)}h\lz\frac{\log X}{2\pi i} \left( s- \frac{1}{2} \right) \pz \lz \frac {L'}{L}(s, \chi_{j, \varpi})+\frac {L'}{L}(s, \overline \chi_{j, \varpi})+2\frac {\Gamma'}{\Gamma}(s)+\log \frac {|D_K|N(\varpi)}{4\pi^2} \pz \dif s.
\end{split}
\end{align*}

  We conclude from the above that
\begin{align}
\label{Dsimplified}
\begin{split}
	F(X)D(X;h) = \frac{1}{2\pi i}\int\limits_{(2)} & h\lz\frac{\log X}{2\pi i}(s-1/2)\pz \sumstar_{\substack{\varpi \odd}}\Lambda_K(\varpi) w \bfrac {N(\varpi)}X \\
& \hspace{0.1in} \times \lz \frac {L'}{L}(s, \chi_{j, \varpi})+\frac {L'}{L}(s, \overline \chi_{j, \varpi})+2\frac {\Gamma'}{\Gamma}(s)+\log \frac {|D_K|N(\varpi)}{4\pi^2} \pz \dif s.
\end{split}
\end{align}

From \cite[Lemma 10.1]{Cech1}, if the Fourier transform of $h$ is supported in the interval $[-a,a]$, then
\begin{equation}
\label{Lemma size of h}
		h\bfrac{s\log X}{2\pi i}\ll \frac{X^{a\cdot\Re(s)}}{|s|^2(\log X)^2}.
\end{equation}

  We now evaluate
\begin{equation*}
	I:= \frac{1}{2\pi i}\int_{(2)}h\lz\frac{\log X}{2\pi i}(s-1/2)\pz\sumstar_{\substack{\varpi \odd}}\Lambda_K(\varpi) w \bfrac {N(\varpi)}X\lz \frac {L'}{L}(s, \chi_{j, \varpi})+\frac {L'}{L}(s, \overline \chi_{j, \varpi}) \pz ds
\end{equation*}
  by noting that $L(s, \overline \chi_{j, \varpi})=\overline{L(\overline s, \chi_{j, \varpi})}$ for any primary prime $\varpi$, so that by \eqref{Sum of L'/L with removed 2-factors},
\begin{align}
\label{Sum of L'/L with removed 2-factors1}
\sumstar_{\substack{\varpi}}\frac {\Lambda_{K}(\varpi)L'(\tfrac 12+r, \overline \chi_{j, \varpi})}{L(\tfrac 12+r, \overline \chi_{j, \varpi})}w \bfrac {N(\varpi)}X
= \frac {\M w(1)X}{\# h_{(9)}}\lz\frac{(\zeta^{(j)}_K(\tfrac{3}{2}+3 r))'}{\zeta^{(j)}_K(\tfrac{3}{2}+3 r)}-\frac{\log 3 }{3^{1/2+r}-1}\pz+O((1+|r|)^{\varepsilon}X^{1-\Re(r)+\varepsilon}).
\end{align}

 We set $s=1/2+r+it$ with $0<r<1/2$ and apply \eqref{Sum of L'/L with removed 2-factors},  \eqref{Sum of L'/L with removed 2-factors1} to see that
\begin{align}
\label{Integral after ratios}
\begin{split}
	I=&	\frac {\M w(1)X}{\pi \# h_{(9)}}\int_{-\infty}^{\infty} h\lz\frac{\log X}{2\pi i}(r+it)\pz \lz\frac{(\zeta^{(j)}_K( \tfrac{3}{2}+3r+3it))'}{\zeta^{(j)}_K(\tfrac{3}{2}+3r+3it)}-\frac{\log 3 }{3^{1/2+r+it}-1}\pz dt \\
& +\frac{1}{2\pi i}\int_{(2)}h\lz\frac{\log X}{2\pi i} \left( s- \frac{1}{2} \right)\pz O(|t|^{\varepsilon}X^{1-r+\varepsilon}) dt.
\end{split}
\end{align}
  We bound the last integral above using \eqref{Lemma size of h} to see that it is
\begin{align*}
  \ll & X^{1-r+\varepsilon}\int\limits_{-\infty}^{\infty}h\lz\frac{\log X}{2\pi i}(r+it)\pz|t|^{\varepsilon} \dif t \ll X^{1-r+ar+\varepsilon}.
\end{align*}
  We then set $r=1/2-\varepsilon$ to obtain the error term in \eqref{Onelevel}. \newline

 We also evaluate the first expression on the right-hand side of \eqref{Integral after ratios} by shifting the line of integration to $r=0$ to see that
\begin{align}
\label{Main term in one-level density}
\begin{split}
	I=	 \frac {\M w(1) X}{\pi \# h_{(9)}} & \int\limits_{-\infty}^{\infty}h\lz\frac{\log X}{2\pi}t\pz \lz\frac{(\zeta^{(j)}_K(\tfrac{3}{2}+3it))'}{\zeta^{(j)}_K(\tfrac{3}{2}+3it)}-\frac{\log 3 }{3^{1/2+it}-1}\pz \dif t\\
		=& \frac {2 \M w(1) }{\# h_{(9)}} \cdot \frac{X}{\log X}\int\limits_{-\infty}^{\infty}h(u)\lz\frac{(\zeta^{(j)}_K(\tfrac{3}{2}+ \frac {6\pi iu}{\log X}))'}{\zeta^{(j)}_K(\tfrac{3}{2}+\frac {6\pi iu}{\log X})}-\frac{\log 3 }{3^{1/2+ 2\pi iu/\log X-1}}\pz \dif u.
\end{split}
\end{align}

   We next compute
\begin{align*}
\begin{split}
	\frac{1}{2\pi i}\int_{(2)}h\lz\frac{\log X}{2\pi i} \left( s- \frac{1}{2} \right) \pz\sumstar_{\substack{\varpi \odd}}\Lambda_K(\varpi) w \bfrac {N(\varpi)}X\lz 2\frac {\Gamma'}{\Gamma}(s)+\log \frac {|D_K|N(\varpi)}{4\pi^2} \pz ds
\end{split}
\end{align*}
  by shifting the line of integration to $\Re(s)=1/2$ to see that it equals
\begin{align}
\label{Dsimplified2}
\begin{split}
\frac {2\widehat h(1)}{\log X} \sumstar_{\substack{\varpi \odd}}\Lambda_K(\varpi) w \bfrac {N(\varpi)}X\log \frac {|D_K|N(\varpi)}{4\pi^2}
 + \sumstar_{\substack{\varpi \odd}}\Lambda_K(\varpi) w \bfrac {N(\varpi)}X \frac{1}{\pi}\int^{\infty}_{-\infty}h\lz\frac{\log X}{2\pi}t \pz \frac {\Gamma'}{\Gamma}(\tfrac 12+it) dt.
\end{split}
\end{align}

 As $h$ is even, we see that
\begin{align}
\label{Dsimplified3}
\begin{split}
	\frac{1}{\pi}\int\limits^{\infty}_{-\infty}h\lz\frac{\log X}{2\pi}t \pz \frac {\Gamma'}{\Gamma}(\frac 12+it) dt=& \frac{1}{2\pi}\int\limits^{\infty}_{-\infty}h\lz\frac{\log X}{2\pi}t \pz \lz \frac {\Gamma'}{\Gamma}(\frac 12+it) +\frac {\Gamma'}{\Gamma}(\frac 12-it) \pz \dif t \\
=& \frac 1{\log X}\int\limits^{\infty}_{-\infty}h\lz u \pz \lz \frac {\Gamma'}{\Gamma}\left( \frac 12+\frac {2 \pi iu}{\log X} \right) +\frac {\Gamma'}{\Gamma}\left( \frac 12-\frac {2 \pi iu}{\log X} \right) \pz \dif u.
\end{split}
\end{align}

  The expression given in \eqref{Onelevel} now readily follows from \eqref{Size of family}, \eqref{Dsimplified}, \eqref{Main term in one-level density}--\eqref{Dsimplified3}. We further simplify the right-hand side expression in \eqref{Onelevel} by noticing that for any real $y \geq 0$,
\begin{align*}
\begin{split}
			 \sumstar_{\substack{\varpi \odd \\ N(\varpi) \leq y}}\Lambda_K(\varpi) =\frac{1}{\# h_{(9)}} \sum_{\psi \bmod {9}} \ \sum_{\substack{\varpi \odd \\ N(\varpi) \leq y}}\Lambda_K(\varpi)\psi(\varpi).
\end{split}
\end{align*}

  It follows from \cite[(5.49)]{iwakow} and \cite[Theorem 5.15]{iwakow} that
\begin{align*}
\begin{split}
			 \sum_{\substack{\varpi \odd \\ N(\varpi) \leq y}}\Lambda_K(\varpi)\psi(\varpi)=
\begin{cases}
  y+O(y^{1/2+\varepsilon}), \quad \text{$\psi$ is principal}, \\
  O(y^{1/2+\varepsilon}), \quad \text{otherwise}.
\end{cases}
\end{split}
\end{align*}

   We then conclude that
\begin{align}
\label{sumstarvarpiest}
\begin{split}
			 \sumstar_{\substack{\varpi \odd \\ N(\varpi) \leq y}}\Lambda_K(\varpi) =\frac{y}{\# h_{(9)}}+O(y^{1/2+\varepsilon}).
\end{split}
\end{align}

   The above estimation and partial summation now renders
\begin{align}
\label{Fest}
\begin{split}
		& F(X)= \int\limits^{\infty}_0  w \bfrac {u}X  \dif \left( \sumstar_{\substack{\varpi \odd \\ N(\varpi) \leq u}}\Lambda_K(\varpi) \right)= \frac{\widehat w(1)}{\# h_{(9)}}X+O(X^{1/2+\varepsilon}),  \\
 & \sumstar_{\substack{\varpi \odd}}\Lambda_K(\varpi) w \bfrac {N(\varpi)}X\log N(\varpi) = \frac{\widehat w(1)}{\# h_{(9)}}X\log X+\frac{1}{\# h_{(9)}}\left( \int\limits^{\infty}_0w(u)\log u \dif u \right) X+O(X^{1/2+\varepsilon}).
\end{split}
\end{align}

  Note that for any $a, b \in \mr$, we have the approximate formula (cf.~\cite[8.363.3]{GR})
\begin{align*}
\begin{split}
  \frac{\Gamma'}{\Gamma}(a + ib) + \frac{\Gamma'}{\Gamma}(a-ib) = 2\frac{\Gamma'}{\Gamma} (a) + O \left( \leg ba^2
\right).
\end{split}
\end{align*}

   It follows that
\begin{align}
\label{Dsimplified5}
\begin{split}
  \frac{1}{2\pi}\int^{\infty}_{-\infty}h\lz\frac{\log X}{2\pi}t \pz \lz \frac {\Gamma'}{\Gamma}(\frac 12+it) +\frac {\Gamma'}{\Gamma}(\frac 12-it) \pz dt
=&  \frac{4 }{\log X}\frac{\Gamma'}{\Gamma}(\frac 12) \widehat h(1) + O
((\log X)^{-3} ).
\end{split}
\end{align}

   We readily deduce \eqref{Onelevelasym} from \eqref{Onelevel}, \eqref{Fest} and \eqref{Dsimplified5}. This completes the proof of Theorem \ref{Theorem one-level density}.

\section{Proof of Theorems \ref{Theorem for all charactersQ}--\ref{Theorem one-level densityQ}}
\label{sec thirdthm}

  As the proof of Theorems \ref{Theorem for all charactersQ}-\ref{Theorem one-level densityQ} is similar to that of Theorems \ref{Theorem for all characters}-\ref{Theorem one-level density}, we shall only give a sketch of the proof of Theorem \ref{Theorem for all charactersQ} here.  We recall from \cite[Lemma 2.2]{G&Zhao2022-4} that the primitive cubic Dirichlet characters of prime conductor $p$ co-prime to $3$ are induced by
$\chi_{j,\varpi}$ for some prime $\varpi \in \mathcal O_K$ such that $N(\varpi) = p$. It follows that in this case there are exactly two such cubic Dirichlet characters,  induced by $\chi_{j,\varpi}$ and $\overline \chi_{j,\varpi}$. We then deduce that
\begin{align}
\label{ratioLQ}
\begin{split}
 \sum_{\substack{ p=N(\varpi) \\ \varpi \equiv 1 \bmod {9}} } & \;
\sum_{\substack{\chi \bmod{p} \\ \chi^3 = \chi_0, \ \chi \neq \chi_0}} \Lambda(p) \frac {L(\tfrac{1}{2}+\alpha, \chi)}{L(\tfrac{1}{2}+\beta, \chi)}  \Phi \leg{p}{Q} \\
=&
\sumstar_{\varpi}\Lambda_K(\varpi)\frac {L_{\mq}(\tfrac{1}{2}+\alpha, \chi_{j, \varpi})}{L_{\mq}(\tfrac{1}{2}+\beta, \chi_{j, \varpi})}  \Phi \leg{N(\varpi)}{Q} + \sumstar_{\varpi}\Lambda_K(\varpi) \frac {L_{\mq}(\tfrac{1}{2}+\alpha, \overline \chi_{j, \varpi})}{L_{\mq}(\tfrac{1}{2}+\beta, \overline \chi_{j, \varpi})} \Phi \leg{N(\varpi)}{Q},
\end{split}
\end{align}
  where $L_{\mq}(s, \chi_{j, \varpi})$ and $L_{\mq}(s, \overline \chi_{j, \varpi}))$ stand for the Dirichlet $L$-function associated to $\chi_{j, \varpi}$ and $\overline \chi_{j, \varpi}$, respectively,  by treating them as Dirichlet characters. \newline

  We define, for $\Re(s), \Re(w)$ and $\Re(z)$ large enough,
\begin{align}
 \label{Bswzexp}
\begin{split}
 B_{j}(s,w,z)=& \sumstar_{\varpi}\frac{\Lambda_{K}(\varpi) L_{\mq}(w,  \chi_{j, \varpi})}{N(\varpi)^sL_{\mq}(z,  \chi_{j, \varpi})}.
\end{split}
\end{align}

  Then Mellin inversion leads to
\begin{equation}
\label{IntegralB}
	\sumstar_{\substack{\varpi}}\frac {\Lambda_{K}(\varpi)L_{\mq}(\tfrac 12+\alpha, \chi_{j, \varpi})}{L_{\mq}(\tfrac 12+\beta, \chi_{j, \varpi})}\Phi \bfrac {N(\varpi)}Q=\frac1{2\pi i}\int\limits_{(c)}B_{j}\lz s,\tfrac12+\alpha, \tfrac12+\beta\pz X^s\widehat \Phi(s) \dif s,
\end{equation}

   We now obtain the analytical properties of $B_{j}(s,w,z)$ similar to what has been done in Section \ref{sec: Aproperty} for $A_{K,j}(s,w,z)$, upon making the following replacements: $\mu_K$ by the usual M\"obius function  $\mu$ on $\mz$, $1-\omega$ by $3$, the condition $m, k$ primary by $m, k \geq 1, (mk, 3)=1$, the norms $N(m), N(k)$ by $m, k$. This allows us to deduce that other than a simple pole at $s = 1$ when $mk$ is a perfect cube
and $\psi$ is the principal character, $B_{j}(s,w,z)$ is convergent in the region $S_1$ as well, where $S_1$ is defined in \eqref{S1} and an estimation similar to that given in \eqref{Aswboundwlarge6} with $A_{K,j}$ replaced by $B_K$ holds also.  Moreover, the residue at $s=1$ equals to
\begin{align}
\label{Ress=1Q}
\begin{split}
  \frac{1}{\# h_{(9)}} \frac{1-3^{-z}}{1-3^{-w}} \frac {\zeta^{(j)}(3w)}{\zeta^{(j)}(2w+z)}.
\end{split}
\end{align}

 Further, for any primitive Dirichlet character $\chi$ modulo $q$, let $\af=0$ or $1$ be given by $\chi(-1)=(-1)^{\af}$.  Set
\begin{align*}
  \Lambda(s, \chi)= \left( \frac {q}{\pi} \right)^{(s+\af)/2}\Gamma \left( \frac 12(s+\af) \right)L(s, \chi).
\end{align*}
   Then $\Lambda(s, \chi)$ extends to an entire function on $\mc$ when $\chi \neq \chi_0$ and satisfies the following functional equation (see \cite[Theorem 4.15]{iwakow}):
\begin{align*}
  \Lambda(s, \chi)=\frac {\tau(\chi)}{i^{\af}q^{1/2}}\Lambda(1-s, \overline \chi), \; \mbox{where} \;  \tau(\chi)=\sum_{x \bmod {n}}\chi(x)e(x)
\end{align*}
is the Gauss sum.\newline

  We apply the above to $\chi=\chi_{j, \varpi}$ for a prime $\varpi \equiv 1 \pmod 9$, regarded as a primitive cubic Dirichlet character modulo $N(\varpi)$. Notice that in this case we have ${\af}=0$ since $(-1)^3=(-1)$. Also, we have $\tau(\chi_{j, \varpi})=g_{K, j}(\varpi)$ by \cite[(2.2)]{G&Zhao2022-4}. It follows that
\begin{align}
\label{Bswfuneqn}
 B_{j}(s,w, z)= \sumstar_{\varpi}\frac{\Lambda_{K}(\varpi) L_{\mq}(w,  \chi_{j, \varpi})}{N(\varpi)^sL_{\mq}(z,  \chi_{j, \varpi})}
=  \pi^{w-1/2}\frac {\Gamma(\frac {1-w}{2})}{\Gamma (\frac w2)} \sumstar_{\varpi} \frac{\Lambda_{K}(\varpi)g_{K,j}(\varpi) L_{\mq}(1-w,  \overline \chi_{j, \varpi})}{N(\varpi)^{s+w}L_{\mq}(z,  \chi_{j, \varpi})}.
\end{align}

  We then conclude that $B_{j}(s,w, z)$ also converges in the region $S_2$ and hence in $S_3$, where $S_2$, $S_3$ are defined in \eqref{S2} and \eqref{S3}, respectively.  Moreover, in the region $S_{2, \varepsilon}$, we have by \eqref{Stirlingratio} that
\begin{align}
\label{BboundS2}
\begin{split}
 \Big|B_j(s,w, z) \Big | \ll  (1+|w|)^{1/2-\Re(w)+\varepsilon}.
\end{split}
\end{align}

  It follows that in the region $S_{3,\varepsilon} \cap \{ (s,w, z)| \Re(w) > 1/2- 1/11 \}$, we have
\begin{align}
\label{Aswboundslarge2Q}
\begin{split}
 |(s-1)B_{j}(s,w,z)| \ll &
\begin{cases}
(1+|s|)^{1+\varepsilon}(1+|w|)^{1/11+\varepsilon}|z|^{\varepsilon}, & \frac 12-\frac 1{11}< \Re(w) < \frac 12, \\
(1+|s|)^{1+\varepsilon}(1+|w|)^{\varepsilon}|z|^{\varepsilon}, & \Re(w) \geq \frac 12.
\end{cases}
\end{split}
\end{align}

   We now proceed as in Section \ref{sec: conclusion} by shifting the line of integration in \eqref{IntegralB} to $\Re(s)=E(\alpha, \beta)+\varepsilon$ to arrive at an asymptotical formula for the first sum on the right-hand side of \eqref{ratioLQ}.  As
\begin{align}
\label{sum12rel}
\begin{split}
 \sumstar_{\varpi}\Lambda_K(\varpi) \frac {L_{\mq}(1/2+\alpha, \overline \chi_{j, \varpi})}{L_{\mq}(1/2+\beta, \overline \chi_{j, \varpi})} \Phi \leg{N(\varpi)}{Q}=\overline{
\sumstar_{\varpi}\Lambda_K(\varpi)\frac {L_{\mq}(1/2+\overline \alpha, \chi_{j, \varpi})}{L_{\mq}(1/2+\overline \beta, \chi_{j, \varpi})}  \Phi \leg{N(\varpi)}{Q}},
\end{split}
\end{align}
  we deduce a similar asymptotical formula for the second sum on the right-hand side of \eqref{ratioLQ}. This then leads to \eqref{Asymptotic for ratios of all charactersQ} and thus completes the proof of Theorem \ref{Theorem for all charactersQ}.

\vspace*{.5cm}

\noindent{\bf Acknowledgments.}  P. G. is supported in part by NSFC grant 12471003 and L. Z. by the FRG Grant PS43707 at the University of New South Wales. \newline



\bibliography{biblio}
\bibliographystyle{amsxport}

\end{document}